\newtheorem{lem}{Lemma}[section]
\newtheorem{thm}{Theorem}[section]
\newtheorem{propo}{Proposition}[section]
\newtheorem{rem}{Remark}[section]
\def\theequation{\thesection.\arabic{equation}}
\newcommand{\tabcaption}{\def\@captype{table}\caption}
\begin{document}

\title{\Large\bf  An adaptive   hybrid stress  transition quadrilateral finite element method for linear elasticity}


\author {
 Feiteng Huang \thanks{School of Mathematics, Sichuan University, Chengdu, 610064, China. Email: hftenger@gmail.com} \and
 Xiaoping Xie \thanks{School of Mathematics, Sichuan University, Chengdu, 610064, China. Corresponding author. Email: xpxiec@gmail.com} \and
 Chen-Song Zhang \thanks{Academy of Mathematics and System Sciences, Beijing, 100190, China. Email: zhangcs@lsec.cc.ac.cn}
}

\date{}
\maketitle
\begin{abstract}
  In this paper, we discuss an adaptive  hybrid stress finite element method on quadrilateral meshes  for linear elasticity problems. To deal with hanging nodes arising in the adaptive  mesh refinement,  we propose new transition types of hybrid stress quadrilateral elements with 5 to 7 nodes. In particular, we derive a priori error estimation for   the 5-node transition hybrid stress element  to show that it is free from Poisson-locking, in the sense that   the error bound in the a priori estimate is independent of the Lam\'e constant $\lambda$.
   We  introduce, for quadrilateral meshes, refinement/coarsening algorithms, which do not require storing the refinement tree explicitly, and    give an adaptive algorithm.  Finally  we provide some numerical results.

\bigskip
\noindent{\bf Keywords.} Hybrid stress element,
 transition element, adaptive method, quadrilateral mesh,
 Poisson-locking, plane elasticity

 \end{abstract}

\renewcommand{\theequation}{\thesection.\arabic{equation}}


\section{Introduction}
\setcounter{equation}{0}

Let $\Omega\subset\mathbb{R}^2$ be a convex polygonal domain, with boundary $\Gamma=\Gamma_N\cup\Gamma_D$ and
meas$(\Gamma_D)>0$.
Let $\mathbf{n}$
be the outward unit normal vector on $\Gamma$. The plane linear elasticity problem reads
\begin{equation}\label{model}
\left\{\begin{array}{ll}
-{\bf div}~\sigma = \mathbf{f} & \mbox{in} ~\Omega \\
\sigma = \mathbb{C}\varepsilon(\mathbf{u}) &\mbox{in} ~ \Omega\\
\sigma\cdot\mathbf{n}|_{\Gamma_N}=\mathbf{g}, & \mathbf{u}|_{\Gamma_D}=0
\end{array}
\right.
\end{equation}
where $\sigma\in\mathbb{R}_{\text{sym}}^{2\times2}$ is the symmetric stress tensor, $\mathbf{u}\in\mathbb{R}^2$ the
displacement field, $\varepsilon(\mathbf{u})=\frac{1}{2}(\nabla+\nabla^T)\mathbf{u}$ the strain tensor, $\mathbf{f}\in\mathbb{R}^2$ the body loading density, and $\mathbf{g}\in\mathbb{R}^2$ the surface traction.
Here $\mathbb{C}$ denotes the elasticity modulus tensor
with $\mathbb{C}\varepsilon(\mathbf{u})=2\mu\varepsilon(\mathbf{u})+\lambda\text{div}(\mathbf{u})\mathbb{I}$ and
$\mathbb{I}$ is the $2\times 2$ identity tensor. The constants $\mu, \lambda$ are the Lam$\acute{\text{e}}$
parameters, given by $\mu=\frac{E}{2(1+\nu)}, \lambda=\frac{E\nu}{(1+\nu)(1-2\nu)}$ for
plane strain problems and by $\mu=\frac{E}{2(1+\nu)}, \lambda=\frac{E\nu}{(1+\nu)(1-\nu)}$
for plane stress problems, where $0<\nu<0.5$ is the Poisson's ratio and $E$ is the Young's modulus.

Hybrid stress  finite element method (also called assumed stress hybrid finite element method), based on  Hellinger--Reissner variational principle and pioneered  by Pian~\cite{Pian1964}, is known to be an efficient  approach  ~\cite{pian1984rational,pian2000some,Pian-Wu, xie2004optimization,xie2008accurate,yu2011uniform} to  improve the performance of the standard 4-node compatible displacement quadrilateral
(bilinear) element, which yields poor results for problems with bending and, for plane strain problems, at
the nearly incompressible limit.
 In~\cite{pian1984rational} Pian and Sumihara derived a robust  4-node hybrid stress quadrilateral element (abbr. PS) through a rational choice of stress terms.
 Xie and Zhou~\cite{xie2004optimization,xie2008accurate} proposed accurate 4-node hybrid stress quadrilateral elements by optimizing stress modes with a so-called energy-compatibility condition~\cite{Zhou-Nie}. Yu, Xie and Carstensen ~\cite{yu2011uniform} analyzed the methods and obtained uniform convergence and a posteriori error estimation~\cite{pian1984rational,xie2004optimization}. It  is worth noticing that the 4-node hybrid stress finite element method is of almost the same computational cost as the bilinear Q4 element due to the local elimination of stress parameters.

Adaptive mesh refinement (AMR) for the numerical solution of the PDEs is  a standard tool
in science and engineering to achieve better accuracy with minimum degrees of
freedom.
The typical structure in one iteration of adaptive algorithms consists of four steps:
\begin{center}
\textbf{Solve $\longrightarrow$ Estimate $\longrightarrow$ Mark $\longrightarrow$ Refine/Coarsen.}
\end{center}
AMR methods locally refine/coarsen meshes according to the estimated error distribution through repeating
the above working loop comprised of finite element solution, error estimation, element (edge or patch) marking,
and mesh refinement/coarsening until the error decreases to  a prescribed level.
Classical recursive bisection and coarsening algorithms~\cite{rheinboldt1980data,samet1984quadtree,kossaczky1994recursive}
 are widely used in adaptive algorithms (see, for example, ALBERTA~\cite{schmidt2005design} and deal.II~\cite{bangerth2007deal}).
 These algorithms make use of a refinement tree data structure and subroutines to store/access the refinement history.

Chen and Zhang~\cite{Chen2010} proposed a non recursive refinement/coarsening algorithm  for triangular meshes which does not require storing the bisection
tree explicitly. They only store coordinates of vertices and connectivity of triangles which
are the minimal information required to represent a mesh for standard finite element computation. In fact, they build the bisection tree structure implicitly into a special ordering of the triangles and simplify the implementation of adaptive mesh refinement and coarsening---thus provided an
easy-access interface for the usage of mesh adaptation without much sacrifice in computing time.
These algorithms have been extended to 3D later by Bartels and Schreier~\cite{bartels2012local}.

Refinement and coarsening for adaptive quadrilateral meshes are more difficult than the counterparts for triangular meshes.
When a 4-node quadrilateral element is subdivided into four smaller elements,
hanging nodes might appear on the element boundaries of its immediate
neighborhoods.
There are several different approaches to deal with the hanging nodes.
Borouchaki and Frey~\cite{borouchaki1998adaptive} presented a method to convert the
triangular mesh into a quadrilateral mesh, by which one can use
the adaptive triangular mesh generation method and then convert the mesh to a quadrilateral one.
Schneiders~\cite{schneiders1996refining} provided some template elements for local refinement
to connect   different layer patterns. This method would keep the conformity of mesh, but at the same time, could introduce distorted elements.
Another approach is to introduce transition elements, namely, keep the 'hanging' nodes in the mesh. This kind of mesh is called 1-irregular mesh,
which is widely used in the field of adaptive quadrilateral finite element methods.

Gupta~\cite{gupta1978finite}
derived a set of compatible interpolation functions for the
quadrilateral transition elements. The displacement interpolation
along a 3-node edge is continuous piecewise bilinear instead
of quadratic, thus preserves the inter-element compatibility. McDill~\cite{mcdill1987isoparametric} and Morton~\cite{morton1995new}
extended Gupta's conforming transition
elements to 3D.
Choi et al.~\cite{choi1989nonconforming,choi1993three,choi1997conforming,choi2004nonconforming}
proposed a set of 2D and 3D nonconforming transition elements.  Carstensen and Hu~\cite{carstensen2009hanging} provided a method to preserve the inter-element compatibility with just modifying the nodal bases of the immediate neighborhoods of the hanging nodes.
In~\cite{Huang2010}  Huang and Xie proved that the consistency
error of Choi and Park's 5-node nonconforming transition  quadrilateral element~\cite{choi1989nonconforming,choi1997conforming} is of only $O(h^{1/2})$-accuracy on  transition edges of the quadrilateral
subdivision. By modifying the shape functions with respect to edge mid-nodes,
the authors obtained  a  transition element with improved consistency error of order $ O(h)$.  Zhao, Shi, and Du~\cite{zhao2013constraint}
further extended the element to higher orders and
established   a posteriori error reliability and efficiency analysis.

For the plane
elasticity problem (\ref{model}), Lo, Wan, and Sze developed 4-node to 7-node  hybrid stress transition elements, using Gupta's   conforming displacement interpolation
functions~\cite{gupta1978finite}   and corresponding 5-parameter to 11-parameter stress modes in skew coordinates.
 Wu, Sze, and Lo~\cite{wu2009two} constructed, for 2D and 3D  elasticity problems, new enhanced assumed strain (EAS) and hybrid stress transition element families   with respect to the incompatible displacement modes
of Choi and Park~\cite{choi1989nonconforming,choi1997conforming}.

In this paper, basing on the incompatible displacement interpolation
functions by Huang and Xie~\cite{Huang2010}, we propose new 5-node to 7-node hybrid stress  transition quadrilateral  elements for the elasticity problem~\eqref{model} on adaptive meshes. We derive, for the presented 5-node transition element, a first-order   a priori error estimate  which is uniform with respect to the Lam\'e   constant $\lambda$.
Besides, we introduce new refinement/coarsening algorithms  for quadrilateral meshes, which are  counterparts of the algorithms by Chen and Zhang~\cite{Chen2010} for triangular meshes. And we present an adaptive finite element method based on the proposed hybrid stress transition elements.

The rest of this paper is organized as follows. In section 2, we present   weak formulations for the plane linear
elasticity problem.  Section 3 shows the construction of new  hybrid stress  transition  elements.
Section 4  provides  new refinement/coarsening algorithms  for quadrilateral meshes and an adaptive hybrid stress finite element method.
Finally we give  some numerical experiments in Section 5.

\section{Weak formulations}
\setcounter{equation}{0}



We define the following spaces:
$$
\mathbf{V}:=\left\{\mathbf{u}\in H^1(\Omega)^2: \mathbf{u}|_{\Gamma_D}=0\right\},
$$
\begin{displaymath}
\begin{array}{l}
\Sigma:=\left\{ \begin{array}{ll} \mathbf{L}^2(\Omega;\mathbb{R}_{sym}^{2\times2}),&\text{if}~~ meas(\Gamma_N)>0,\\
\left\{\tau\in\mathbf{L}^2(\Omega;\mathbb{R}_{sym}^{2\times2}):\int_\Omega
tr\tau\mbox{d}\Omega=0\right\},&\text{if}~~ \Gamma_N=\emptyset.
\end{array}\right.
\end{array}
\end{displaymath}
Here $H^k(T)$ denotes the  usual Sobolev space consisting
of functions defined on $T$ with derivatives of order up to $k$ being square-integrable, with norm $\|\cdot\|_{k,T}$ and semi-norm $|\cdot|_{k,T}$.  In particular,  $H^0(T)=L^2(T)$.  When there
is no conflict, we may abbreviate the norm and semi-norm  to $\|\cdot\|_{k}$ and $|\cdot|_{k}$, respectively.  We use $\mathbf{L}^2(\Omega;\mathbb{R}_{sym}^{2\times2})$ to denote the space of square-integrable symmetric tensors
with the norm $\Vert\cdot\Vert_0$ defined by $\Vert\tau\Vert_0^2:=\int_\Omega\tau:\tau\text{d\bf{x}}$,
and $tr\tau:=\tau_{11}+\tau_{22}$ to represent the trace of $\tau$. We note that on the space $\mathbf{V}$ the semi-norms $|\cdot|_1$,  $|\varepsilon(\cdot)|_0$  and  the norm $\|\cdot\|_1$ are equivalent due to Korn's inequalities.

Basing on the Hellinger--Reissner variational principle, the weak problem for the model (\ref{model}) reads:
Find $(\sigma,\mathbf{u})\in\Sigma\times\mathbf{V}$, such that
\begin{eqnarray}
&&a(\sigma,\tau)-b(\tau,\mathbf{u}_h)=0,~~~~\forall\tau\in\Sigma
\label{e1}
\\
&&b(\sigma,\mathbf{v})=F(\mathbf{v}),~~~~\forall\mathbf{v}\in
V \label{e2}
\end{eqnarray}
where
\begin{eqnarray}
 a(\sigma,\tau)&:=&\int_\Omega\sigma:
\mathbb{C}^{-1}\tau\mbox{d}\mathbf{x}
=\frac{1}{2\mu}\int_\Omega\left(\sigma:\tau-\frac{\lambda}{2(\mu+\lambda)}tr\sigma tr\tau\right)\mbox{d}\mathbf{x},\nonumber\\
 b(\tau,\mathbf{v})&:=&\int_\Omega\tau:\varepsilon(\mathbf{v})\mbox{d}\mathbf{x},\nonumber\\
 F(\mathbf{v})&:=&\int_\Omega
\mathbf{f}\cdot\mathbf{v}\mbox{d}\mathbf{x}+\int_{\Gamma_N}\mathbf{g}\cdot\mathbf{v}\mbox{d}s.\nonumber
\end{eqnarray}

As shown in~\cite{yu2011uniform}, the following two uniform stability conditions hold for the weak problem (\ref{e1}--\ref{e2}).
\begin{itemize}
  \item (A1) Kernel-coercivity: for any $\tau\in Z:=\{\tau\in\Sigma:\int_\Omega\tau:\mathbf{\epsilon(\mathbf{v})}\mbox{d}\mathbf{x}=0, \text{for all } \mathbf{v}\in\mathbf{V}\}$
   it holds  $\Vert\tau\Vert_0^2\lesssim a(\tau,\tau)$.
  \item (A2) Inf-sup condition: for any $\mathbf{v}\in\mathbf{V}$ it holds
   $\vert\mathbf{v}\vert_1\lesssim \sup\limits_{0\neq\tau\in\Sigma}\frac{\int_\Omega\tau:\epsilon(\mathbf{v})\mbox{d}\mathbf{x}}{\Vert\tau\Vert_0}$.
\end{itemize}
Here and in what follows, we use the notation $a\lesssim b$ (or $a\gtrsim b$)~\cite{xu1989theory}
to represent that there exists a generic positive constant $C$,
independent of the mesh parameter $h$ and  Lam$\acute{\text{e}}$ constant $\lambda$, such that $a\leq Cb$ (or $a\geq C b$). The notation $a\approx b$ abbreviates $a\lesssim b\lesssim a$.

We have the following well-posedness result; see~\cite{yu2011uniform}.
\begin{propo}\label{4convergencerate}
   Assume that $\mathbf{f}\in L^2(\Omega)^2, \mathbf{g}\in H^{1/2}(\Gamma_N)$. Then the
  weak problem (\ref{e1})--(\ref{e2}) admits a unique solution $(\sigma,\mathbf{u})\in\Sigma\cap H^1(\Omega;\mathbb{R}^{2\times2}_{sym})\times\mathbf{V}\cap H^2(\Omega)^2$ such that
  $$\vert\sigma\vert_1+\vert \mathbf{u} \vert_2\lesssim\Vert\mathbf{f}\Vert_0+\Vert\mathbf{g}\Vert_{\frac{1}{2},\Gamma_N}.$$
\end{propo}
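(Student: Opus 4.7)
The plan is to combine the abstract Brezzi theory for the mixed formulation with classical $H^2$-regularity for the primal Lam\'e system on a convex polygonal domain, making sure that every constant appearing in the argument is tracked to be independent of $\lambda$.

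First, I would obtain existence, uniqueness and a basic stability estimate in $\Sigma\times\mathbf{V}$. The bilinear forms $a(\cdot,\cdot)$ and $b(\cdot,\cdot)$ are continuous on $\Sigma\times\Sigma$ and $\Sigma\times\mathbf{V}$ respectively by Cauchy--Schwarz (with a continuity constant $\lesssim 1/\mu$ for $a$, which is harmless since $\mu$ is fixed). Combined with the kernel-coercivity (A1) and the inf-sup condition (A2) recalled just before the proposition, Brezzi's theorem yields a unique solution $(\sigma,\mathbf{u})\in\Sigma\times\mathbf{V}$ with
$$\Vert\sigma\Vert_0+\vert\mathbf{u}\vert_1\lesssim\Vert\mathbf{f}\Vert_0+\Vert\mathbf{g}\Vert_{1/2,\Gamma_N}.$$

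Second, I would reduce to the primal displacement formulation. Varying $\tau$ arbitrarily in (\ref{e1}) and using $a(\sigma,\tau)=\int_\Omega \mathbb{C}^{-1}\sigma:\tau\mbox{d}\mathbf{x}$ together with integration by parts against $\mathbf{u}$ yields $\sigma=\mathbb{C}\varepsilon(\mathbf{u})$ pointwise, and substituting into (\ref{e2}) shows that $\mathbf{u}$ solves the Lam\'e system $-\mathbf{div}(\mathbb{C}\varepsilon(\mathbf{u}))=\mathbf{f}$ in $\Omega$ with $\mathbf{u}|_{\Gamma_D}=0$ and $\mathbb{C}\varepsilon(\mathbf{u})\cdot\mathbf{n}|_{\Gamma_N}=\mathbf{g}$. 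Since $\Omega$ is convex polygonal and $\mathrm{meas}(\Gamma_D)>0$, I would invoke the $\lambda$-uniform $H^2$-regularity result for this system on convex domains (of Brenner--Sung type, used in~\cite{yu2011uniform}) to conclude $\mathbf{u}\in H^2(\Omega)^2$ together with
$$\vert\mathbf{u}\vert_2+\lambda\vert\mathrm{div}\,\mathbf{u}\vert_1\lesssim\Vert\mathbf{f}\Vert_0+\Vert\mathbf{g}\Vert_{1/2,\Gamma_N}.$$
Finally, because $\sigma=2\mu\varepsilon(\mathbf{u})+\lambda\,\mathrm{div}(\mathbf{u})\mathbb{I}$, one has $\vert\sigma\vert_1\lesssim\vert\mathbf{u}\vert_2+\lambda\vert\mathrm{div}\,\mathbf{u}\vert_1$, and combining with the previous display gives $\vert\sigma\vert_1+\vert\mathbf{u}\vert_2\lesssim\Vert\mathbf{f}\Vert_0+\Vert\mathbf{g}\Vert_{1/2,\Gamma_N}$, which is the claim.

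The only delicate step is the $\lambda$-uniformity of the $H^2$-regularity estimate: a generic elliptic regularity argument yields a constant that depends on the coefficients of the operator, and here $\lambda$ can be arbitrarily large near the incompressible limit. The convexity of $\Omega$ is essential—it is what permits the quantity $\lambda\,\mathrm{div}\,\mathbf{u}$ (playing the role of a pressure) to be controlled in $H^1$ uniformly in $\lambda$—and this is also precisely the ingredient underlying the locking-free a priori analysis carried out later in the paper, so I would cite the result rather than reprove it from scratch.
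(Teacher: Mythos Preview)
The paper does not actually prove this proposition: it simply states the result and refers the reader to~\cite{yu2011uniform}. Your outline is precisely the standard route one finds there---Brezzi's theorem using (A1)--(A2) for well-posedness in $\Sigma\times\mathbf{V}$, identification of the primal Lam\'e problem, and then the $\lambda$-uniform $H^2$ regularity on a convex polygon \`a la Brenner--Sung to upgrade to $\vert\mathbf{u}\vert_2+\lambda\vert\mathrm{div}\,\mathbf{u}\vert_1$ and hence $\vert\sigma\vert_1$. So your proposal is correct and matches what the cited reference does; there is nothing to compare against in the present paper itself.

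One caveat worth flagging if you write this up in detail: the Brenner--Sung estimate is usually stated for the pure Dirichlet problem, whereas here the boundary is split as $\Gamma_D\cup\Gamma_N$. With mixed boundary conditions, $H^2$ regularity on a convex polygon can fail at corners where the boundary condition type changes, and the $\lambda$-uniform version requires a compatible geometric assumption at those transition points. The paper (and~\cite{yu2011uniform}) tacitly assume this is in force; you should too, or restrict to $\Gamma_N=\emptyset$ when making the claim fully rigorous.
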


%

\section{Hybrid  stress transition  quadrilateral   elements}
\setcounter{equation}{0}
\subsection{Element geometry}

Let $\mathcal{T}_h$
be a conventional quadrilateral mesh of $\Omega$. We denote
by $h_K$ the diameter of a quadrilateral $K \in \mathcal{T}_h$, and denote
$h := \max_{K\in\mathcal{T}_h}h_K$. Let $Z_i(x_i,y_i), 1\leq i\leq4$
 be the four vertices of $K$,
and $T_i$ denotes the sub-triangle of $K$ with vertices $Z_{i-1}, Z_i$ and $Z_{i+1}$
(the index on $Z_i$ is modulo 4).

We assume that the partition $\mathcal{T}_h$ satisfies the
following ``shape-regularity'' hypothesis: there exist a constant
$\varrho>2$ independent of $h$ such that for all $K \in \mathcal{T}_h$,
\begin{equation}\label{geohyp}
  h_K\leq \varrho\rho_K
\end{equation}
with $
  \rho_K: = \min\limits_{1\leq i\leq 4} \{\text{ diameter of circle inscribed in } T_i\}.
$

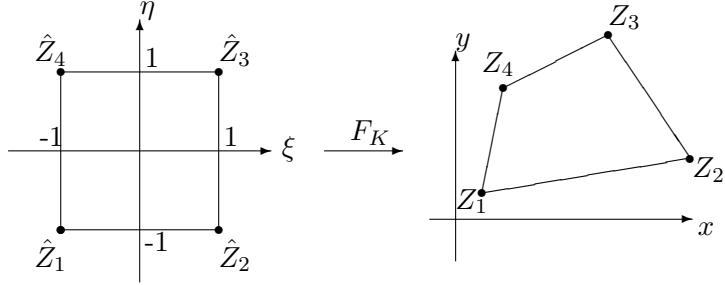
\begin{figure}[!h]
\begin{center}
\setlength{\unitlength}{0.7cm}
\begin{picture}(10,6)
\put(0,1.5){\line(1,0){3}}        \put(3,1.5){\line(0,1){3}}
\put(0,1.5){\line(0,1){3}} \put(0,4.5){\line(1,0){3}}
\put(0,1.5){\circle*{0.15}}      \put(0,4.5){\circle*{0.15}}
\put(3,4.5){\circle*{0.15}}       \put(0,1.5){\circle*{0.15}}
\put(3,1.5){\circle*{0.15}} \put(-0.5,0.8){\bf{$\hat{Z}_{1}$}}
\put(3,0.8){\bf{$\hat{Z}_{2}$}}    \put(3,4.7){\bf{$\hat{Z}_{3}$}}
\put(-0.5,4.7){\bf{$\hat{Z}_{4}$}}  \put(-1,3){\vector(1,0){5}}
\put(1.5,0.5){\vector(0,1){5}} \put(4.2,2.9){$\xi$}
\put(1.5,5.6){$\eta$}

\put(1.6,1.1){-1}                 \put(1.6,4.6){1}
\put(-0.4,3.1){-1} \put(3.1,3.1){1}

\put(5,3){\vector(1,0){1.5}}      \put(5.5,3.2){$F_K$}

\put(8,2.2){\line(6,1){4}}        \put(8,2.2){\line(1,5){0.4}}
\put(8.4,4.2){\line(2,1){2}} \put(10.4,5.2){\line(2,-3){1.6}}
\put(8,2.2){\circle*{0.15}}      \put(8.4,4.2){\circle*{0.15}}
\put(10.4,5.2){\circle*{0.15}}    \put(11.95,2.85){\circle*{0.15}}
\put(7.5,1.9){\bf{$Z_{1}$}} \put(12,2.5){\bf{$Z_{2}$}}
\put(10.4,5.4){\bf{$Z_{3}$}}       \put(8.0,4.5){\bf{$Z_{4}$}}
\put(7,1.7){\vector(1,0){5}}      \put(7.5,0.9){\vector(0,1){4}}
\put(12.1,1.4){$x$} \put(7.5,5.0){$y$}

\end{picture}
\end{center}
\vspace{-1cm} \caption{The mapping $F_{K}$}\label{rr}
\end{figure}

We define the  bilinear mapping $F_K:\widehat{K}={[-1,1]}^2\longrightarrow K$ (see Figure \ref{rr}) as
\begin{equation}\label{coordinates mapping}
{\bf x}=\left(\begin{array}{c} x\\y \end{array}\right )
=F_K(\xi,\eta)=\frac{1}{4}\sum_{i=1}^4(1+\xi_i\xi)(1+\eta_i\eta)\left(\begin{array}{c} x_i\\y_i \end{array}\right),
\end{equation}
where $\xi,\eta$ are the local  coordinates, and
\begin{displaymath}
\left(\begin{array}{cccc}\xi_1 & \xi_2 & \xi_3 & \xi_4 \\ \eta_1 &
\eta_2 & \eta_3 & \eta_4 \end{array} \right )=\left (
\begin{array}{cccc} -1 & 1 & 1 & -1 \\ -1 & -1 & 1 & 1
\end{array}\right ).
\end{displaymath}
The Jacobi matrix of the transformation $F_K$ is
\begin{displaymath}
DF_K(\xi,\eta)=\left ( \begin{array}{cc}\frac{\partial
x}{\partial\xi} & \frac{\partial x}{\partial\eta} \\ \frac{\partial
y}{\partial\xi} & \frac{\partial y}{\partial\eta}\end{array} \right
)=\left (
\begin{array}{cc} a_1+a_{12}\eta & a_2+a_{12}\xi \\ b_1+b_{12}\eta&
b_2+b_{12}\xi
\end{array}\right )
\end{displaymath}
with
\begin{displaymath}
\left( \begin{array}{cc}a_{1} & b_{1} \\ a_{2} & b_{2} \\ a_{12} & b_{12}\end{array}\right)=\frac{1}{4}\left(\begin{array}{cccc}-1 & 1 & 1 & -1 \\ -1 & -1 & 1 & 1 \\ 1 & -1 & 1 & -1 \end{array}\right)\left(\begin{array}{cc}x_{1} & y_{1} \\ x_{2} & y_{2} \\ x_{3} & y_{3} \\ x_{4} & y_{4}\end{array}\right).
\end{displaymath}
The Jacobian, $
J_K$, of $F_K$ has the form
$$
J_K(\xi,\eta)=det(DF_K)=J_0+J_1\xi+J_2\eta
$$
with
\begin{displaymath}
J_0=a_1b_2-a_2b_1,\;\;J_1=a_1b_{12}-a_{12}b_1,\;\;J_2=a_{12}b_2-a_2b_{12}.
\end{displaymath}

Under the hypothesis (\ref{geohyp}), it holds the following element geometric properties (see~\cite{zhang1997analysis}): For any $K\in \mathcal{T}_h$,
  \begin{eqnarray}
\frac{\max\limits_{(\xi,\eta)\in\hat
K}J_K(\xi,\eta)}{\min\limits_{(\xi,\eta)\in\hat
K}J_K(\xi,\eta)}<\frac{h_K^2}{2\rho_K^2}\le\frac{\varrho^2}{2},\label{a0}
\end{eqnarray}
 \begin{eqnarray}
 \rho_K^2<4(a_1^2+b_1^2)<h_K^2, \label{a0}, \qquad
\rho_K^2<4(a_2^2+b_2^2)<h_K^2, \qquad 
4(a_{12}^2+b_{12}^2)<\frac{1}{4}h_K^2. \label{a3}
\end{eqnarray}

Without loss of generality, we assume
\begin{equation}
  \vert b_1\vert \leq a_1 \quad\text{and}\quad \vert a_2\vert\lesssim b_2.
\end{equation}
Then we have
\begin{eqnarray}\label{abc}
  a_1\approx b_2\approx h_K,\quad \max\{a_2,b_1\}\lesssim O(h_K),\quad
  J_K\approx J_0\approx h_K^2.
\end{eqnarray}

\subsection{ 5-node to 7-node hybrid stress  transition elements}

Let $u_i, v_i \, (i=1,...,8)$ be the two components of displacement of the four vertices and four mid-nodes
of a transition quadrilateral element $K$ (see Figure~\ref{nodenumber2d} for  nodal number systems).
Following~\cite{Huang2010}, we define the nodal basis $N_i$ ($i=1,\cdots,8$) as follows:
%
\begin{equation}\label{N1-4}
\left\{\begin{array}{l}
N_1=\frac{1}{4}(1-\xi)(1-\eta)-\frac{1}{2}(\tilde{N}_7+\tilde{N}_8),\ \ N_2=\frac{1}{4}(1+\xi)(1-\eta)-\frac{1}{2}(\tilde{N}_8+\tilde{N}_5),\\
N_3=\frac{1}{4}(1+\xi)(1+\eta)-\frac{1}{2}(\tilde{N}_5+\tilde{N}_6),\ \ N_4=\frac{1}{4}(1-\xi)(1+\eta)-\frac{1}{2}(\tilde{N}_6+\tilde{N}_7),
\end{array}\right.
\end{equation}
and
 \begin{eqnarray}\label{N_i}
N_i = \Delta_i\tilde{N_i} \quad \text{ for }i=5,\cdots,8,
\end{eqnarray}
where
$$   \Delta_i = \left\{
      \begin{array}{ll} 1, & \text{if the $i$-th node exists (see Figure~\ref{nodenumber2d})}, \\ 0, & \text{otherwise},
  \end{array}
  \right.
$$
\begin{equation}\label{N5-8}
\left\{\begin{array}{l}
\tilde{N}_5=\frac{3}{8}(1+\xi)(1-\eta^2), \ \
\tilde{N}_6=\frac{3}{8}(1+\eta)(1-\xi^2), \\
\tilde{N}_7=\frac{3}{8}(1-\xi)(1-\eta^2),\ \
\tilde{N}_8=\frac{3}{8}(1-\eta)(1-\xi^2).
\end{array}\right.
\end{equation}

The displacement interpolation function $\mathbf{v}_{tr}$ on the transition element $K$ has the form
\begin{equation}\label{transition interpolation}
    \hat{\mathbf{v}}_{tr} = {\mathbf{v}}_{tr} \circ F_K = \sum_{i=1}^8N_i\left( \begin{array}{l}u_i \\ v_i\end{array}\right).
\end{equation}

\begin{rem}
We note that if $K$ is a normal 4-node quadrilateral element,  the displacement interpolation  $\mathbf{v}_{tr}$ reduces to the standard isoparametric bilinear interpolation $ {\mathbf{v}}_{bi}$,  i.e.
\begin{equation} \label{bilinear interpolation}
   \mathbf{v}_{tr}= \hat{\mathbf{v}}_{bi}: = {\mathbf{v}}_{bi} \circ F_K = \sum_{i=1}^4N_i\left( \begin{array}{l}u_i \\ v_i\end{array}\right).
\end{equation}
\end{rem}

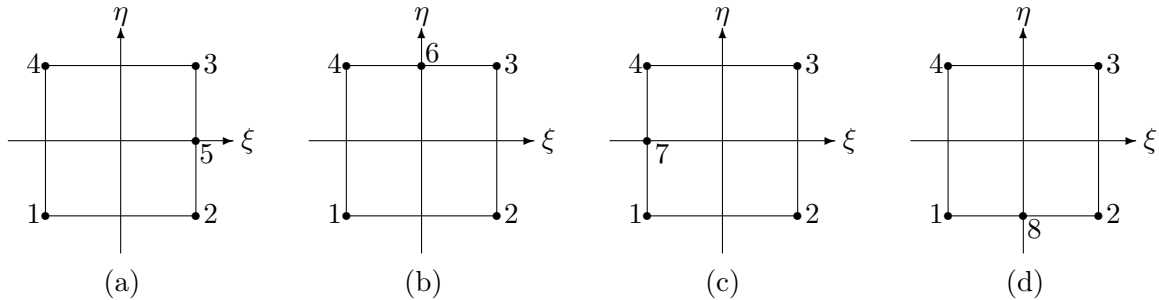
\begin{figure}[!h]
\begin{center}
\setlength{\unitlength}{1cm}
\begin{picture}(12,3)

\multiput(-1.5,1.5)(4.0,0){4}{\vector(1,0){3}}\multiput(1.6,1.40)(4.0,0){4}{$\xi$}
\multiput(0.0,0)(4.0,0){4}{\vector(0,1){3}}  \multiput(-0.1,3.1)(4.0,0){4}{$\eta$}

\multiput(-1,0.5)(4,0){4}{\line(1,0){2}} \multiput(-1,0.5)(4,0){4}{\line(0,1){2}}
\multiput(1.0,2.5)(4,0){4}{\line(-1,0){2}}\multiput(1.0,2.5)(4,0){4}{\line(0,-1){2}}
\multiput(-1,0.5)(4,0){4}{\circle*{0.10}}\multiput(1.0,0.5)(4,0){4}{\circle*{0.10}}
\multiput(1.0,2.5)(4,0){4}{\circle*{0.10}}\multiput(-1,2.5)(4,0){4}{\circle*{0.10}}

\multiput(-1.25,0.4)(4,0){4}{1}\multiput(1.1,0.4)(4,0){4}{2}
\multiput(-1.25,2.4)(4,0){4}{4}\multiput(1.1,2.4)(4,0){4}{3}
\multiput(1.0,1.5)(4,0){1}{\circle*{0.10}}
\multiput(1.05,1.20)(4,0){1}{5}

\multiput(4.0,2.5)(4,0){1}{\circle*{0.10}}
\multiput(4.05,2.55)(4,0){1}{6}

\multiput(7,1.5)(4,0){1}{\circle*{0.10}}
\multiput(7.1,1.20)(4,0){1}{7}

\put(12.0,0.5){\circle*{0.10}}
\put(12.05,0.20){8}
\put(-0.23,-0.5){(a)}
\put(3.77,-0.5){(b)}
\put(7.77,-0.5){(c)}
\put(11.77,-0.5){(d)}

\end{picture}
\end{center}
\caption{Node number system for transition elements}\label{nodenumber2d}
\end{figure}

Let $\mathbf{V_h}$ be a finite dimensional displacement space defined as
\begin{equation}\label{vh} \mathbf{V_h}:=\Bigg\{\mathbf{v}:
 \mathbf{v}|_K=\mathbf{v}_{tr} \text{ for } K\in \mathcal{T}_h, \text{ and }\mathbf{v} \text{ vanishes at the nodes on }{\Gamma_D}
\Bigg\},
\end{equation}
where   $\mathbf{v}_{tr}$  is given  by (\ref{transition interpolation}). We define, on  $\mathbf{V_h}$, a semi-norm
$$
\|\mathbf{v}\|_h:=\left(\sum\limits_{K\in\mathcal{T}_h}\int_K\nabla{\mathbf{v}}:\nabla{\mathbf{v}}d\mathbf{x}\right)^{1/2}.
$$
It is easy to see $\|\cdot\|_h$ is also a norm on  $\mathbf{V_h}$.
\begin{rem}
  From (\ref{N5-8}) and (\ref{N1-4}), it is easy to get the following relation~\cite{Huang2010}:
  \begin{equation}
    \int_e[\mathbf{w}]ds = \mathbf{0} \qquad \forall \mathbf{w} \in\mathbf{V}_h, \quad \forall e \in \mathcal{E}_h^*,\label{key}
  \end{equation}
  where   $[\mathbf{w}]$   denotes the jump of   function $ \mathbf{w}$ across an interior
edge $e$ with $[\mathbf{w}]=\mathbf{w}$ when $e\subset \partial\Omega$, and $\mathcal{E}_h^*$ is the set of all 3-node edges of all transition elements  in $\mathcal{T}_h$.

\end{rem}



In the following we introduce 5-parameter to 11-parameter stress modes corresponding to arbitrary 4-node to 7-node quadrilateral elements, with parameters $\beta_i\in \mathbb{R}$ for $i=1,2,\cdots,11$.
We use, for convenience, the Voigt notation $\tau=(\tau_{11},\tau_{22},\tau_{12})^T$ to denote a symmetric stress tensor $\tau=\left(\begin{array}{cc}\tau_{11} & \tau_{12}\\ \tau_{12} & \tau_{22}\end{array}\right)$.

\begin{description}
\item[ (1)]  If $K$ is a 4-node quadrilateral, we use the stress mode of PS~\cite{pian1984rational} or ECQ4 ~\cite{xie2004optimization} hybrid stress element with $\beta^\tau_5=(\beta_1,\ldots,\beta_5)^T$.

PS stress mode:
\begin{equation}\label{tauform4ps}
\hat{\tau}_{4}=\left( \begin{array}{ccccc}
1&0&0&\eta&\frac{a_2^2}{b^2_2}\xi\\
0&1&0&\frac{b_1^2}{a_1^2}\eta&\xi\\
0&0&1&\frac{b_1}{a_1}\eta&\frac{a_2}{b_2}\xi
\end{array}\right)\beta^\tau_5
\end{equation}

 ECQ4 stress mode:
\begin{equation}\label{tauform4ecq4}
\hat{\tau}_{4}=\left( \begin{array}{ccccc}
1-\frac{b_{12}}{b_2}\xi&\frac{a_{12}a_2}{b^2_2}\xi&\frac{a_{12}b_2-a_2b_{12}}{b^2_2}\xi&\eta&\frac{a_2^2}{b^2_2}\xi\\
\frac{b_1b_{12}}{a_1^2}\eta&1-\frac{a_{12}}{a_1}\eta&\frac{a_1b_{12}-a_{12}b_1}{a_1^2}\eta&\frac{b_1^2}{a_1^2}\eta&\xi\\
\frac{b_{12}}{a_1}\eta&\frac{a_{12}}{b_2}\xi&1-\frac{b_{12}}{b_2}\xi-\frac{a_{12}}{a_1}\eta&\frac{b_1}{a_1}\eta&\frac{a_2}{b_2}\xi
\end{array}\right)\beta^\tau_5.
\end{equation}

\item[ (2)]  If $K$ is a 5-node transition quadrilateral, we use the 7-parameter mode with $\beta^\tau_7=(\beta_1,\ldots,\beta_7)^T$:
\begin{eqnarray}\label{tauform}
\hat{\tau}_5&=&\left( \begin{array}{ccccccc}
1&0&0&\eta&0&\xi&0\\
0&1&0&0&\xi&0&\eta\\
0&0&1&\frac{b_1^2\xi+b_1b_2\eta}{a_1b_2-a_2b_1}&\frac{a_1a_2\xi+a_2^2\eta}{a_1b_2-a_2b_1}&
\frac{b_1b_2\xi+b_2^2\eta}{a_2b_1-a_1b_2}&\frac{a_1^2\xi+a_1a_2\eta}{a_2b_1-a_1b_2}
\end{array}\right)\beta^\tau_7\nonumber\\
&=:&M_7\beta_7^\tau.
\end{eqnarray}

\item[ (3)]  If $K$ is a 6-node transition quadrilateral with opposite mid-side
nodes, we use the 9-parameter mode with $\beta^\tau_9=(\beta_1,\ldots,\beta_7,\beta_8,\beta_9)^T$:
\begin{eqnarray}\label{tauform6-1}
\hat{\tau}_{6}=M_7\beta_7^\tau+
&\left( \begin{array}{cc}
2a_2^2\xi\eta-2a_1a_2\xi^2 & a_2^2\xi^2\\
2b_2^2\xi\eta-2b_1b_2\xi^2 & b_2^2\xi^2\\
2a_2b_2\xi\eta-(a_1b_2+a_2b_1)\xi^2 & a_2b_2\xi^2
\end{array}\right)
\left( \begin{array}{c}
\beta_8 \\
\beta_9
\end{array}
\right).
\end{eqnarray}

If $K$ is a 6-node transition quadrilateral with adjacent mid-side
nodes, we use the 9-parameter mode
\begin{eqnarray}\label{tauform6-2}
\hat{\tau}_{6}=M_7\beta_7^\tau+
&\left( \begin{array}{cc}
a_1^2\eta^2 & a_2^2\xi^2\\
b_1^2\eta^2 & b_2^2\xi^2\\
a_1b_1\eta^2 & a_2b_2\xi^2
\end{array}\right)
\left( \begin{array}{c}
\beta_8 \\
\beta_9
\end{array}
\right)=:M_9\beta_9^\tau.
\end{eqnarray}
\item[ (4)]  If $K$ is a 7-node transition quadrilateral, we use the 11-parameter mode
\begin{eqnarray}\label{tauform7}
\small \hat{\tau}_{7}=M_9\beta_9^\tau
+\left( \begin{array}{cc}
2a_1^2\xi\eta-2a_1a_2\eta^2 & 2a_2^2\xi\eta-2a_1a_2\xi^2\\
2b_1^2\xi\eta-2b_1b_2\eta^2 & 2b_2^2\xi\eta-2b_1b_2\xi^2\\
2a_1b_1\xi\eta-(a_1b_2+a_2b_1)\eta^2 &
2a_2b_2\xi\eta-(a_1b_2+a_2b_1)\xi^2
\end{array}\right)
\left( \begin{array}{c}
\beta_{10}\\
\beta_{11}
\end{array}
\right).
\end{eqnarray}

\end{description}

\begin{rem}
We now introduce the modified partial derivatives$\frac{\tilde\partial
\cdot}{\partial x},\frac{\tilde\partial \cdot}{\partial x}$, and corresponding $\tilde
{ {div}} \cdot,\tilde\varepsilon(\cdot)$~\cite{zhang1997analysis}: For any $K\in\mathcal{T}_h$,
\begin{eqnarray}
&(J_K\frac{\tilde\partial v}{\partial x}|_K\circ
F_K)(\xi,\eta)=\frac{\partial y}{\partial \eta}(0,0)\frac{\partial
\hat v}{\partial \xi}-\frac{\partial y}{\partial
\xi}(0,0)\frac{\partial\hat v}{\partial \eta}=b_2\frac{\partial \hat
v}{\partial \xi}-b_1\frac{\partial\hat v}{\partial \eta},&\nonumber\\
&(J_K\frac{\tilde\partial v}{\partial y}|_K\circ
F_K)(\xi,\eta)=\frac{\partial x}{\partial\xi}(0,0)\frac{\partial\hat
v}{\partial\eta}-\frac{\partial x}{\partial \eta}(0,0)\frac{\partial
\hat v}{\partial \xi}=a_1\frac{\partial\hat
v}{\partial\eta}-a_2\frac{\partial\hat v}{\partial\xi},&\nonumber\\
&\tilde{ {div}}\mathbf{v}|_K=\frac{\tilde\partial u}{\partial
x}+\frac{\tilde\partial v}{\partial y},\qquad \tilde\varepsilon(\mathbf{v})|_K=\left(\begin{array}{cc}
\frac{\tilde\partial u}{\partial x} &
\frac{1}{2}\left(\frac{\tilde\partial u}{\partial
y}+\frac{\tilde\partial v}{\partial x}\right) \\
\frac{1}{2}\left(\frac{\tilde\partial u}{\partial y} +
\frac{\tilde\partial v}{\partial x}\right) & \frac{\tilde\partial
v}{\partial y}\end{array}\right).&\nonumber
\end{eqnarray}
It is easy to know that the stress modes $\hat{\tau}_i$ defined in (\ref{tauform})--(\ref{tauform7})  satisfy the modified equilibrium relation
$$ \tilde{\bf div} \tau:=\left(\tilde{div}\left(\begin{array}{l}\tau_{11}\\\tau_{12}\end{array}\right), \tilde{div}\left(\begin{array}{l}\tau_{12}\\\tau_{22}\end{array}\right)\right)^T=0 \quad \text{ on } K $$
 for $\tau|_K=\hat{\tau}_i\circ F_K^{-1}$ and $i=5,\ldots,7$.
In particular, for the 7-parameter stress mode, $\hat{\tau}_5$ in (\ref{tauform}),  of a 5-node transition element, it's easy to verify the relation
\begin{eqnarray}\label{tauandv}
\int_K\mathbf{\tau:}\tilde\varepsilon(\mathbf{v}^b)\mbox{d}\mathbf{x}=0
\end{eqnarray}
for any $
\mathbf{v}^b \in
B_h := \left\{\mathbf{v}\in
L^2(\Omega)^2: \hat{\mathbf{v}}=\mathbf{v}^b|_K\circ F_K \in
\text{span}\{1-(\xi^2+\eta^2)/2\}^2,\forall K \in \mathcal{T}_h\right\}.
$

\end{rem}

\begin{rem}
  We note that the stress modes (\ref{tauform})--(\ref{tauform7}) for the 5-node to 7-node transition elements can be viewed as modified versions of those   introduced
  by Lo, Wan and Sze~\cite{lo2006adaptive}.  In particular,  these two versions are identical when $K$  is a parallelogram.
   \end{rem}

Basing on the stress modes (\ref{tauform4ps})--(\ref{tauform7}),  we define the approximation stress space $\Sigma_h$ as
\begin{displaymath}
\Sigma_h =\Bigg\{\tau \in
\Sigma: \hat{\tau}=\tau|_K \circ F_K =
        \hat{\tau}_i, \text{ if $K$  is a $i$-node quadrilateral  in }\mathcal{T}_h, i=4,\ldots,7\Bigg\}.
\end{displaymath}

Now we give the hybrid stress finite element scheme for the problem (\ref{e1})--(\ref{e2}):
find $(\sigma_h, \mathbf{u}_h)\in\Sigma_h\times\mathbf{V}_h$ such that
\begin{eqnarray}
&a(\sigma_h,\tau)-b_h(\tau, \mathbf{u}_h)=0, &\forall\tau\in\Sigma_h
\label{d1}
\\
&b_h(\sigma_h,\mathbf{v})=F(\mathbf{v}),& \forall\mathbf{v}\in
\mathbf{V}_h \label{d2}
\end{eqnarray}
where $b_h(\tau,\mathbf{v})=\sum\limits_{K\in\mathcal{T}_h}\int_K\tau:\varepsilon(\mathbf{v})\mbox{d}\mathbf{x}$.

\subsection{Uniform error    estimation for 5-node  hybrid stress transition element}

To derive uniform error estimates for the hybrid stress method (\ref{d1})--(\ref{d2}),
we   need, according to the mixed finite element method theory~\cite{fortin1991mixed,brezzi1974existence}, the following
two discrete versions of the uniform stability conditions (A1) and (A2):
\begin{itemize}
  \item[(A1$_h$)] Discrete Kernel-coercivity: For any $\tau\in Z_h:=\{\tau\in\Sigma_h:\sum\limits_K\int_K\tau:\mathbf{\epsilon(\mathbf{v})}\mbox{d}\mathbf{x}=0, \; \forall\mathbf{v}\in\mathbf{V}_h\}$,
   it holds that  $\Vert\tau\Vert_0^2\lesssim a(\tau,\tau)$.
  \item[(A2$_h$)] Discrete Inf-sup condition: For any $\mathbf{v}\in\mathbf{V}_h$, it holds that
   $\Vert\mathbf{v}\Vert_h\lesssim \sup\limits_{0\neq\tau\in\Sigma_h}\frac{\sum\limits_K\int_K\tau:\epsilon(\mathbf{v})\mbox{d}\mathbf{x}}{\Vert\tau\Vert_0}$.
\end{itemize}

It has been shown that the uniform stability conditions (A1$_h$)--(A2$_h$) hold in the case of 4-node hybrid stress quadrilateral finite element method~\cite{yu2011uniform}. In this subsection we will show that they also hold for the proposed  5-node  hybrid stress transition element.  For the cases of 6-node and 7-node transition elements, one may follow the same method to get similar stability results.

As for (A1$_h$),
following the same procedure as in the proof of Theorem 4.1 in~\cite{yu2011uniform} and using Theorem 5.2 of~\cite{zhang1997analysis} and (\ref{tauandv}),  we can easily obtain the following result:
\begin{propo}\label{thm1}
Let the partition $\mathcal{T}_h$ satisfy the shape-regularity
condition (\ref{geohyp}).
Assume that for any $\bar{q}\in\overline{W}_h:=\left\{\bar{q}\in L^2(\Omega): \bar{q}|_K\in P_0(K), \forall K\in\mathcal{T}_h\right\}$, there exists some $\mathbf{v}\in\mathbf{V}_h$ with
$$
\Vert \bar{q}\Vert_0^2\lesssim\int_\Omega\bar{q}div \mathbf{v}\mbox{d}\mathbf{x},\qquad  \Vert \mathbf{v}\Vert_h^2\lesssim\Vert \bar{q}\Vert_0^2.
$$
Then the uniform discrete Kernel-coercivity condition (A1$_h$) holds for the 5-node hybrid stress transition element.
\end{propo}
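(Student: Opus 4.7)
The strategy is to split $\tau$ into its deviatoric and spherical parts and to control the spherical part using the kernel condition $\tau\in Z_h$ together with the hypothesis on $\overline{W}_h$. Setting $\tau^D:=\tau-\tfrac{1}{2}\operatorname{tr}\tau\,\mathbb{I}$, the identity $\tau:\tau=\tau^D:\tau^D+\tfrac{1}{2}(\operatorname{tr}\tau)^2$ gives
\begin{equation*}
a(\tau,\tau)=\frac{1}{2\mu}\|\tau^D\|_0^2+\frac{1}{4(\mu+\lambda)}\|\operatorname{tr}\tau\|_0^2,\qquad
\|\tau\|_0^2=\|\tau^D\|_0^2+\tfrac{1}{2}\|\operatorname{tr}\tau\|_0^2,
\end{equation*}
so $\|\tau^D\|_0^2\le 2\mu\,a(\tau,\tau)$ is automatic, and the entire task reduces to producing a $\lambda$-independent bound $\|\operatorname{tr}\tau\|_0\lesssim\|\tau^D\|_0$.

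Let $P_0:L^2(\Omega)\to\overline{W}_h$ be the elementwise $L^2$-projection and decompose $\operatorname{tr}\tau=P_0\operatorname{tr}\tau+(I-P_0)\operatorname{tr}\tau$. On each element $K$, the trace of the 7-parameter mode $\hat\tau_5$ in (\ref{tauform}), after pullback to $\widehat K$, is a polynomial in $(\xi,\eta)$ whose non-constant part is carried by the same coefficients $\beta_4,\ldots,\beta_7$ that already appear non-trivially in the deviatoric components. Combining this observation with the modified equilibrium identity (\ref{tauandv}), Theorem 5.2 in \cite{zhang1997analysis}, and the geometric estimates (\ref{abc}), I expect the elementwise bound
\begin{equation*}
\|(I-P_0)\operatorname{tr}\tau\|_{0,K}\lesssim\|\tau^D\|_{0,K},
\end{equation*}
with constant depending only on the shape-regularity parameter $\varrho$. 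Summing over $K$ gives $\|(I-P_0)\operatorname{tr}\tau\|_0\lesssim\|\tau^D\|_0$.

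For the piecewise-constant part, apply the hypothesis to $\bar q:=P_0\operatorname{tr}\tau\in\overline{W}_h$, producing $\mathbf{v}\in\mathbf{V}_h$ with $\|\bar q\|_0^2\lesssim\int_\Omega\bar q\,\operatorname{div}\mathbf{v}\,d\mathbf{x}$ and $\|\mathbf{v}\|_h\lesssim\|\bar q\|_0$. Splitting $\tau$ in the kernel identity $\sum_K\int_K\tau:\varepsilon(\mathbf{v})\,d\mathbf{x}=0$ into $\tau^D+\tfrac{1}{2}\operatorname{tr}\tau\,\mathbb{I}$ yields
\begin{equation*}
\int_\Omega\bar q\,\operatorname{div}\mathbf{v}\,d\mathbf{x}=-2\sum_K\int_K\tau^D:\varepsilon(\mathbf{v})\,d\mathbf{x}-\int_\Omega(I-P_0)\operatorname{tr}\tau\,\operatorname{div}\mathbf{v}\,d\mathbf{x}.
\end{equation*}
Cauchy--Schwarz, together with $\|\operatorname{div}\mathbf{v}\|_0\lesssim\|\mathbf{v}\|_h$ and the elementwise bound of the previous paragraph, estimates the right-hand side by $\|\tau^D\|_0\,\|\mathbf{v}\|_h\lesssim\|\tau^D\|_0\,\|\bar q\|_0$. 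Cancelling one factor of $\|\bar q\|_0$ gives $\|P_0\operatorname{tr}\tau\|_0\lesssim\|\tau^D\|_0$, and combining with the remainder estimate produces $\|\operatorname{tr}\tau\|_0\lesssim\|\tau^D\|_0$ and hence (A1$_h$).

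The only delicate step is the elementwise inequality $\|(I-P_0)\operatorname{tr}\tau\|_{0,K}\lesssim\|\tau^D\|_{0,K}$ for the enlarged mode $\hat\tau_5$; every other step is a direct transcription of the 4-node argument in the proof of Theorem 4.1 of \cite{yu2011uniform}. The relation (\ref{tauandv}) plays the role of the stress-equilibrium identity exploited there for the PS and ECQ4 elements, and once the trace/deviatoric decomposition of $\hat\tau_5$ on $\widehat K$ has been verified to be non-degenerate with constants depending only on $\varrho$ via (\ref{abc}), the rest of the argument goes through unchanged.
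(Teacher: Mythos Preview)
Your proposal is correct and follows essentially the same route the paper intends: the paper gives no detailed argument but simply defers to the proof of Theorem~4.1 in \cite{yu2011uniform} together with Theorem~5.2 of \cite{zhang1997analysis} and the orthogonality relation~(\ref{tauandv}), and the deviatoric/spherical splitting with the $P_0$-projection of $\operatorname{tr}\tau$ that you outline is precisely that argument. The elementwise bound $\|(I-P_0)\operatorname{tr}\tau\|_{0,K}\lesssim\|\tau^D\|_{0,K}$ you flag as delicate can indeed be checked directly from the explicit form of $\hat\tau_5$ in~(\ref{tauform}) and the geometric estimates~(\ref{abc}), since $\hat\tau_5^D=0$ forces $\beta_4=\cdots=\beta_7=0$ (the resulting $2\times2$ system has determinant $J_0^2$), so no genuine gap remains.
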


\begin{rem}
  The above result implies that any quadrilateral mesh which is stable for the Stokes element Q1-P0 satisfies (A1$_h$).
  As we know, the only unstable case for Q1-P0 is the checkerboard mode. Thereupon, any quadrilateral mesh which breaks
  the checkerboard mode is sufficient to guarantee the uniform stability condition (A1$_h$).
\end{rem}

The rest of this subsection is devoted to the proof of the uniform
discrete inf-sup condition (A2$_h$) for the 5-node hybrid stress transition element. Without loss of generality we only consider the cases of $(a)$ and $(c)$ in Figure~\ref{nodenumber2d}.
Thus,  from (\ref{N_i})--(\ref{transition interpolation}) we have, for $\mathbf{v}=(u,v)^T\in\mathbf{V_h}$ with nodal values $\mathbf{v}(Z_i)=(u_i,v_i)^T$ on $K$,
\begin{equation}\label{5nodesv}
\hat{\mathbf{v}}=\mathbf{v}\circ F_K=\sum_{i=1}^{5}
N_i\left ( \begin{array}{c}
u_i\\v_i\end{array} \right
)=:\left (
\begin{array}{c}
U_0+U_1\xi+U_2\eta+U_{12}\xi\eta+U_{122}\xi\eta^2\\
V_0+V_1\xi+V_2\eta+V_{12}\xi\eta+V_{122}\xi\eta^2
\end{array}\right ).
\end{equation}
This yields
\begin{eqnarray*}\label{JKV}
&J_K\left(\begin{array}{c}
\frac{\partial u}{\partial x}\\
\frac{\partial v}{\partial y}\\
\frac{\partial u}{\partial y}+\frac{\partial v}{\partial x}\end{array}\right)=
\left(
\begin{array}{c}
 (U_1b_2-U_2b_1)+(U_1b_{12}-U_{12}b_1)\xi+(U_{12}b_2-U_2b_{12})\eta\\
 +U_{122}(b_2\eta^2-b_{12}\xi\eta^2-2b_1\xi\eta)\\
 (V_2a_1-V_1a_2)+(V_{12}a_1-V_1a_{12})\xi+(V_2a_{12}-V_{12}a_2)\eta\\
 +V_{122}(-a_2\eta^2+a_{12}\xi\eta^2+2a_1\xi\eta)\\
 (U_2a_1-U_1a_2)+(U_{12}a_1-U_1a_{12})\xi+(U_2a_{12}-U_{12}a_2)\eta+\\
 (V_1b_2-V_2b_1)+(V_1b_{12}-V_{12}b_1)\xi+(V_{12}b_2-V_2b_{12})\eta+\\
 U_{122}(-a_2\eta^2+a_{12}\xi\eta^2+2a_1\xi\eta)+V_{122}(b_2\eta^2-b_{12}\xi\eta^2-2b_1\xi\eta)
\end{array}
\right)&
\end{eqnarray*}

\begin{eqnarray}\label{JKV}
&=\left(
\begin{array}{ccc}
b_2+b_{12}\xi & 0 & -a_2-a_{12}\xi\\
-b_1-b_{12}\eta & 0 & a_1+a_{12}\eta\\
 -b_1\xi+b_2\eta & 0 & a_1\xi-a_2\eta\\
  0 & a_1+a_{12}\eta & -b_1-b_{12}\eta\\
  0 & a_1\xi-a_2\eta & -b_1\xi+b_2\eta\\
   b_2\eta^2-b_{12}\xi\eta^2-2b_1\xi\eta & 0 & -a_2\eta^2+a_{12}\xi\eta^2+2a_1\xi\eta\\
 0  & -a_2\eta^2+a_{12}\xi\eta^2+2a_1\xi\eta & b_2\eta^2-b_{12}\xi\eta^2-2b_1\xi\eta
\end{array}
\right)^T\beta^v&\nonumber\\
\end{eqnarray}
with
$$
\beta^v=(\beta^v_1,\ldots,\beta^v_7):=
\left(U_1+\frac{b_1}{a_1}V_1, U_2+\frac{b_2}{a_1}V_1,
U_{12}+\frac{b_{12}}{a_1}V_1, V_2-\frac{a_2}{a_1}V_1,
V_{12}-\frac{a_{12}}{a_1}V_1, U_{122}, V_{122}\right)
^T.
$$

\begin{lem}\label{lem1}
  For any $\mathbf{v}\in\mathbf{V}_h$ and $K\in\mathcal{T}_h$, it holds
  \begin{equation}
  \Vert\epsilon(\mathbf{v})\Vert_{0,K}^2\lesssim \frac{1}{\min\limits_{(\xi,\eta)\in\hat{K}}J_K(\xi,\eta)}
  h_K^2\sum\limits_{1\leq i\leq 7}(\beta^v_i)^2.
  \end{equation}
\end{lem}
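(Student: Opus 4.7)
The plan is to proceed directly from the explicit representation (3.12) of $J_K \varepsilon(\mathbf{v})$ in terms of the seven parameters $\beta^v = (\beta_1^v,\ldots,\beta_7^v)$, combined with the element–geometric bounds (3.3)–(3.4) on the coefficients $a_1,a_2,b_1,b_2,a_{12},b_{12}$. The point is that each entry of the $3\times 7$ matrix appearing in (3.12) has the form $c\,p(\xi,\eta)$, where $|c|\lesssim h_K$ by (3.4) and $p(\xi,\eta)$ is a fixed polynomial of bounded degree in $(\xi,\eta)$; since $(\xi,\eta)\in\hat K=[-1,1]^2$, every such $p$ is uniformly bounded by an absolute constant. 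Consequently each matrix entry is pointwise bounded on $\hat K$ by a multiple of $h_K$.

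The first step is therefore to record this pointwise bound: for all $(\xi,\eta)\in\hat K$,
\begin{equation*}
\bigl|(J_K\varepsilon(\mathbf{v}))\circ F_K(\xi,\eta)\bigr| \;\lesssim\; h_K\sum_{i=1}^{7}|\beta_i^v|,
\end{equation*}
and then apply Cauchy--Schwarz on the sum of seven terms to obtain
\begin{equation*}
\bigl|(J_K\varepsilon(\mathbf{v}))\circ F_K(\xi,\eta)\bigr|^2 \;\lesssim\; h_K^2\sum_{i=1}^{7}(\beta_i^v)^2.
\end{equation*}

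The second step is the change of variables $\mathbf{x}=F_K(\xi,\eta)$. Writing $|\varepsilon(\mathbf{v})|^2 = J_K^{-2}|J_K\varepsilon(\mathbf{v})|^2$ and using $d\mathbf{x}=J_K\,d\xi\,d\eta$,
\begin{equation*}
\|\varepsilon(\mathbf{v})\|_{0,K}^2 \;=\; \int_{\hat K}\frac{|(J_K\varepsilon(\mathbf{v}))\circ F_K|^2}{J_K(\xi,\eta)}\,d\xi\,d\eta
\;\lesssim\; h_K^2\!\left(\sum_{i=1}^{7}(\beta_i^v)^2\right)\!\int_{\hat K}\!\frac{d\xi\,d\eta}{J_K(\xi,\eta)}.
\end{equation*}
Bounding $J_K(\xi,\eta)\ge \min_{\hat K}J_K$ and $|\hat K|=4$ yields the stated inequality.

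The calculation is essentially a bookkeeping exercise, so there is no deep obstacle; the only care needed is in verifying uniformly in $(\xi,\eta)\in\hat K$ that every entry of the matrix in (3.12)—including those involving $\eta^2$, $\xi\eta$, and $\xi\eta^2$ from the last two columns—is $O(h_K)$, which follows from the three bounds $a_1,b_2\approx h_K$, $\max\{|a_2|,|b_1|\}\lesssim h_K$, and $|a_{12}|+|b_{12}|\lesssim h_K$ in (3.3)–(3.4). Once this is in place, the Cauchy--Schwarz step and the change of variables produce the factor $h_K^2/\min_{\hat K}J_K$ asserted in the lemma.
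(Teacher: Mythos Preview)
Your proposal is correct and follows essentially the same approach as the paper: both arguments start from the explicit representation of $J_K\varepsilon(\mathbf v)$ as a $3\times 7$ matrix acting on $\beta^v$, bound each entry pointwise on $\hat K$ by $O(h_K)$ via the element--geometric relations $a_1\approx b_2\approx h_K$, $\max\{|a_2|,|b_1|,|a_{12}|,|b_{12}|\}\lesssim h_K$, and then change variables to produce the factor $1/\min_{\hat K}J_K$. The only cosmetic difference is that the paper writes the three squared components of $\varepsilon(\mathbf v)$ out explicitly before bounding, whereas you argue at the level of matrix entries; the substance is identical.
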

\begin{proof}
  From (\ref{JKV}) and (\ref{abc}), we have
  \begin{eqnarray}
  &&\Vert\epsilon(\mathbf{v})\Vert_{0,K}^2=\int_K\epsilon(\mathbf{v}):\epsilon(\mathbf{v})\mbox{d}\mathbf{x}\nonumber\\
  &&=\int_{\hat{K}}\left[
  ((b_2+b_{12}\xi)\beta^v_1-(b_1+b_{12}\eta)\beta^v_2-(b_1\xi-b_2\eta)\beta^v_3+(b_2\eta^2-b_{12}\xi\eta^2-2b_1\xi\eta)\beta^v_6)^2\right.\nonumber\\
  &&\quad+((a_1+a_{12}\eta)\beta^v_4+(a_1\xi-a_2\eta)\beta^v_5+( -a_2\eta^2+a_{12}\xi\eta^2+2a_1\xi\eta )\beta^v_7)^2\nonumber\\
  &&\quad+\frac{1}{2}(-(a_2+a_{12}\xi)\beta^v_1+(a_1+a_{12}\eta)\beta^v_2+(a_1\xi-a_2\eta)\beta^v_3-(b_1+b_{12}\eta)\beta^v_4-(b_1\xi-b_2\eta)\beta^v_5\nonumber\\
  &&\quad+\left.( -a_2\eta^2+a_{12}\xi\eta^2+2a_1\xi\eta)\beta^v_6+(b_2\eta^2-b_{12}\xi\eta^2-2b_1\xi\eta)\beta^v_7)^2
  \right]J_K^{-1}(\xi,\eta)d\xi d\eta\nonumber\\
  &&\lesssim\frac{1}{\min\limits_{(\xi,\eta)\in\hat{K}}J_K(\xi,\eta)}h_K^2\sum\limits_{1\leq i\leq 7}(\beta^v_i)^2.\nonumber
  \end{eqnarray}
\vskip -1cm
\end{proof}

\begin{lem}\label{lem2}
  For any $\tau\in\Sigma_h$ and $K\in\mathcal{T}_h$, it holds that
  \begin{equation}
    \Vert\tau\Vert_{0,K}^2\gtrsim\min\limits_{(\xi,\eta)\in\hat{K}}J_K(\xi,\eta)\sum\limits_{1\leq i\leq 7}(\beta^\tau_i)^2.
  \end{equation}
\end{lem}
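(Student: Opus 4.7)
The plan is to pull the integral back to the reference square $\hat K=[-1,1]^2$ via $\mathbf x=F_K(\xi,\eta)$, use the pointwise bound $J_K\ge\min_{\hat K}J_K$, and then exploit the explicit polynomial structure of $\hat\tau_5=M_7\beta_7^\tau$ from (3.15) to produce a mesh-independent lower bound by $\sum_{i=1}^{7}(\beta_i^\tau)^2$. Writing $\tau:\tau=\tau_{11}^2+2\tau_{12}^2+\tau_{22}^2$ and changing variables,
\begin{equation*}
\|\tau\|_{0,K}^2=\int_{\hat K}\hat\tau_5:\hat\tau_5\,J_K(\xi,\eta)\,d\xi d\eta\ \ge\ \min_{(\xi,\eta)\in\hat K}J_K(\xi,\eta)\cdot\int_{\hat K}\hat\tau_5:\hat\tau_5\,d\xi d\eta,
\end{equation*}
so it suffices to establish $\int_{\hat K}\hat\tau_5:\hat\tau_5\,d\xi d\eta\gtrsim\sum_{i=1}^{7}(\beta_i^\tau)^2$ uniformly in $h$ and in the mesh constants $a_i,b_i,a_{12},b_{12}$.

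Reading the three Voigt components off (3.15), the normal components $\hat\tau_{11}=\beta_1+\beta_6\xi+\beta_4\eta$ and $\hat\tau_{22}=\beta_2+\beta_5\xi+\beta_7\eta$ are linear combinations of the $L^2(\hat K)$-orthogonal monomials $\{1,\xi,\eta\}$, while $\hat\tau_{12}=\beta_3+\alpha\xi+\gamma\eta$ for some scalars $\alpha,\gamma$ depending linearly on $\beta_4,\dots,\beta_7$ and on the mesh constants. Using $\int_{\hat K}\xi\,d\xi d\eta=\int_{\hat K}\eta\,d\xi d\eta=\int_{\hat K}\xi\eta\,d\xi d\eta=0$ together with $\int_{\hat K}d\xi d\eta=4$ and $\int_{\hat K}\xi^2\,d\xi d\eta=\int_{\hat K}\eta^2\,d\xi d\eta=\tfrac{4}{3}$, the squares decouple into $\int_{\hat K}\hat\tau_{11}^2\,d\xi d\eta=4\beta_1^2+\tfrac{4}{3}(\beta_4^2+\beta_6^2)$, $\int_{\hat K}\hat\tau_{22}^2\,d\xi d\eta=4\beta_2^2+\tfrac{4}{3}(\beta_5^2+\beta_7^2)$, and $\int_{\hat K}\hat\tau_{12}^2\,d\xi d\eta=4\beta_3^2+\tfrac{4}{3}(\alpha^2+\gamma^2)\ge 4\beta_3^2$. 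Summing and including the factor $2$ on $\tau_{12}^2$ gives $\int_{\hat K}\hat\tau_5:\hat\tau_5\,d\xi d\eta\ge\tfrac{4}{3}\sum_{i=1}^{7}(\beta_i^\tau)^2$, which combined with the previous display yields the claim.

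I do not foresee a genuine obstacle: the argument is essentially direct algebra once one notices the block-triangular structure of $M_7$ in the orthogonal basis $\{1,\xi,\eta\}$. The only point requiring care is the shear integral, where the nonnegative remainder $\tfrac{4}{3}(\alpha^2+\gamma^2)$ must simply be discarded; attempting to keep or further estimate it would bring in the factor $(a_1b_2-a_2b_1)^{-2}\approx h_K^{-4}$ from (3.15) and destroy the mesh-independence of the constant on the right-hand side.
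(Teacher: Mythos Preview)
Your proof is correct and follows essentially the same approach as the paper: both pull back to $\hat K$, bound $J_K$ below by its minimum, and then exploit the $L^2(\hat K)$-orthogonality of $\{1,\xi,\eta\}$ to obtain the lower bound $\gtrsim\sum_i(\beta_i^\tau)^2$. Your write-up is actually more explicit than the paper's, which simply expands the integrand componentwise and asserts the final inequality; in particular, your observation that the mesh-dependent shear coefficients can (and must) be discarded as a nonnegative remainder makes transparent why the constant is mesh-independent, a point the paper leaves implicit.
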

\begin{proof}
  From (\ref{tauform}) and (\ref{abc}), we have
  \begin{eqnarray}
  &&\Vert\tau\Vert_{0,K}^2=\int_K\tau:\tau d\mathbf{x}\nonumber\\
  &&=\int_{\hat{K}}\left[(\beta^\tau_1+\eta\beta^\tau_4+\xi\beta^\tau_6)^2+(\beta^\tau_2+\xi\beta^\tau_5+\eta\beta^\tau_7)^2+
  2(\beta^\tau_3+
  \frac{b_1^2\xi+b_1b_2\eta}{J_0}\beta^\tau_4\right.\nonumber\\
  &&\quad+\left.
  \frac{a_1a_2\xi+a_2^2\eta}{J_0}\beta^\tau_5-
  \frac{b_1b_2\xi+b_2^2\eta}{J_0}\beta^\tau_6-\frac{a_1^2\xi+a_1a_2\eta}{J_0}\beta^\tau_7)^2
  \right]J_K(\xi,\eta)d\xi d\eta\nonumber\\
  &&\gtrsim\min\limits_{(\xi,\eta)\in\hat{K}}J_K(\xi,\eta)\sum\limits_{1\leq i\leq 7}(\beta^\tau_i)^2.\nonumber
  \end{eqnarray}
\vskip -1cm
\end{proof}

We   introduce a mesh condition given by Shi~\cite{shi1984convergence}:

\noindent\textbf{Condition (A)}. The distance $d_K \, (=2\sqrt{a_{12}^2+b_{12}^2})$ between the midpoints of the diagonals
of $K\in\mathcal{T}_h$ (see Figure \ref{conditionA}) is of order $o(h_K)$ uniformly for all elements $K$ as $h\rightarrow0$.
\begin{figure}[h!!]
  \centering
  \includegraphics[keepaspectratio,width=6cm]{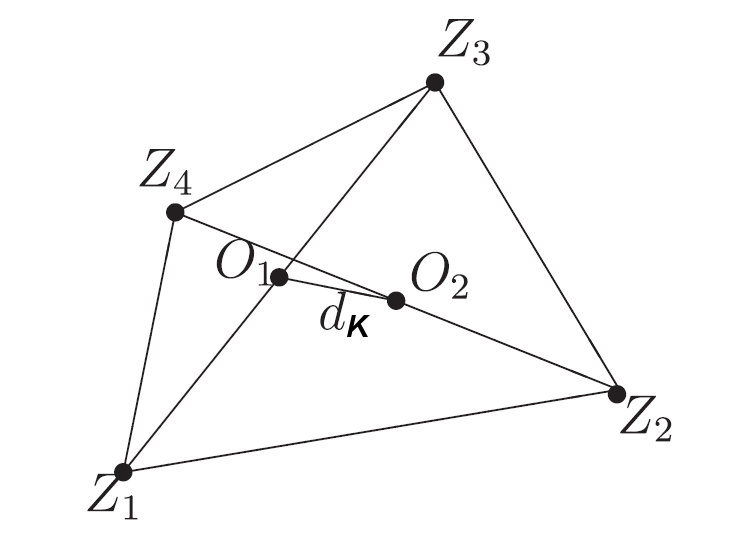}
  \vskip -12pt
  \caption{The distance $d_K$ between midpoints of two diagonals}\label{conditionA}
\end{figure}

Under this condition we have
\begin{equation}\label{conditionAhave}
\max\{\vert a_{12}\vert,\vert b_{12}\vert\}=o(h_K).
\end{equation}

\begin{lem}\label{lem3}
  Under  \textbf{Condition (A)}, for any $\mathbf{v}\in\mathbf{V}_h$ there exists a $\tau_v\in\Sigma_h$ such that for any $K\in\mathcal{T}_h$,
  \begin{equation}\label{lem3-result}
  \int_K\tau_v:\epsilon(\mathbf{v})d\mathbf{x}=\Vert\tau_v\Vert_{0,K}^2\gtrsim \Vert\epsilon(\mathbf{v})\Vert_{0,K}^2.
  \end{equation}
\end{lem}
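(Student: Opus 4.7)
The plan is to construct the stress $\tau_v$ elementwise and reduce the inequality to a finite-dimensional computation in the parameters $\beta^v$ and $\beta^\tau$, with Lemmas \ref{lem1} and \ref{lem2} providing the bridge between the continuous and discrete norms. Concretely, I would define $\tau_v|_K$ as the $L^2(K)$-orthogonal projection of $\epsilon(\mathbf{v})|_K$ onto the local stress space $\Sigma_h|_K$, i.e.\ the unique element of $\Sigma_h|_K$ satisfying
\begin{equation*}
\int_K \tau_v:\tau \, d\mathbf{x} = \int_K \epsilon(\mathbf{v}):\tau \, d\mathbf{x} \qquad \forall \tau \in \Sigma_h|_K.
\end{equation*}
Testing with $\tau = \tau_v$ yields the identity $\int_K \tau_v:\epsilon(\mathbf{v})\,d\mathbf{x} = \|\tau_v\|_{0,K}^2$ for free, so the content of the lemma is the lower bound $\|\tau_v\|_{0,K}^2 \gtrsim \|\epsilon(\mathbf{v})\|_{0,K}^2$.

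To prove the lower bound, I would use the variational characterization $\|\tau_v\|_{0,K} = \sup_{0\neq\tau\in\Sigma_h|_K} \int_K\tau:\epsilon(\mathbf{v})\,d\mathbf{x}/\|\tau\|_{0,K}$ and exhibit, for a given $\mathbf{v}$ with parameter vector $\beta^v=(\beta^v_1,\ldots,\beta^v_7)$ from (\ref{5nodesv}), an explicit test stress $\tau^*\in\Sigma_h|_K$ whose parameters $\beta^\tau = (\beta^\tau_1,\ldots,\beta^\tau_7)$ are a linear function of $\beta^v$, chosen so that
\begin{equation*}
\int_K \tau^*:\epsilon(\mathbf{v})\,d\mathbf{x} \gtrsim h_K^2 \sum_{i=1}^{7}(\beta^v_i)^2, \qquad \|\tau^*\|_{0,K}^2 \lesssim J_K \sum_{i=1}^{7} (\beta^\tau_i)^2 \lesssim J_K \sum_{i=1}^{7}(\beta^v_i)^2.
\end{equation*}
Combining these with $J_K \approx h_K^2$ gives $\int_K \tau^*:\epsilon(\mathbf{v})\,d\mathbf{x} / \|\tau^*\|_{0,K} \gtrsim h_K \bigl(\sum_i (\beta^v_i)^2\bigr)^{1/2}$, and by Lemma \ref{lem1} this is in turn $\gtrsim \|\epsilon(\mathbf{v})\|_{0,K}$, finishing the argument.

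The explicit choice of $\tau^*$ is dictated by the near-diagonal pairing between the rows of $\hat\tau_5$ in (\ref{tauform}) and those of $J_K\epsilon(\mathbf{v})$ in (\ref{JKV}): e.g.\ $\beta^\tau_1$ couples to $\beta^v_1$ through a constant monomial weighted by $b_2\approx h_K$, $\beta^\tau_2$ to $\beta^v_4$ through $a_1\approx h_K$, and similarly for the remaining pairs, with the higher-order terms $\beta^v_6,\beta^v_7$ matched by the parameters of the quadratic stress mode. After integrating on $\hat K$, all monomials $\xi^i\eta^j$ with odd $i+j$ vanish, which enforces this diagonal structure up to cross-terms carrying a factor of $a_{12}$ or $b_{12}$.

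The main obstacle is controlling precisely these cross-couplings: the third (shear) row of $\hat\tau_5$ contains all four of $\beta^\tau_4,\ldots,\beta^\tau_7$ weighted by the rational factors $(b_1^2\xi+b_1b_2\eta)/J_0$ etc., and the $J_K^{-1}$ in (\ref{JKV}) must be absorbed cleanly. This is exactly where Condition (A) intervenes: by (\ref{conditionAhave}), $\max(|a_{12}|,|b_{12}|)=o(h_K)$, so every off-diagonal entry in the $7\times 7$ Gram-type matrix that maps $\beta^v$ to the pairing vector is $o(h_K^2)$, while the diagonal entries are $\gtrsim h_K^2$ thanks to (\ref{abc}). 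The matrix is therefore diagonally dominant for $h$ small enough, its inverse is uniformly bounded, and the desired estimate follows. Without Condition (A) the element could degenerate from a parallelogram strongly enough to make this Gram matrix singular in the $\beta^\tau_{6},\beta^\tau_{7}$ block, which is the real bottleneck of the argument.
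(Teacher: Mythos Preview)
Your overall plan coincides with the paper's: define $\tau_v|_K$ as the $L^2(K)$-projection of $\epsilon(\mathbf{v})$ onto $\Sigma_h|_K$ (equivalently, writing $\int_K\tau:\epsilon(\mathbf{v})\,d\mathbf{x}=(\beta^\tau)^TA\beta^v$ and $\|\tau\|_{0,K}^2=J_K(\xi_0,\eta_0)(\beta^\tau)^TD\beta^\tau$ for explicit $7\times7$ matrices, take $\beta^{\tau_v}=J_K(\xi_0,\eta_0)^{-1}D^{-1}A\beta^v$), which gives the equality for free, and then reduce the lower bound to a $7\times7$ matrix estimate via Lemmas~\ref{lem1}--\ref{lem2}.

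The gap is in your justification of that matrix estimate. First, a scaling slip: the entries of the pairing matrix $A$ are $O(h_K)$, not $O(h_K^2)$ (for instance $A_{11}=4b_2$), so the correct target is $\int_K\tau^*:\epsilon(\mathbf{v})\,d\mathbf{x}\gtrsim h_K\sum_i(\beta^v_i)^2$; with your stated exponents the chain of inequalities would only give $\|\tau_v\|_{0,K}\gtrsim h_K\|\epsilon(\mathbf{v})\|_{0,K}$. More seriously, the diagonal-dominance claim is false. Condition~(A) controls only $a_{12},b_{12}$; it says nothing about $a_2$ and $b_1$, which for a general (non-rectangular) parallelogram are of order $h_K$. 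Off-diagonal entries of $A$ such as $A_{12}=-4b_1$, $A_{31}=-4a_2$, $A_{34}=-4b_1$, $A_{16}=\tfrac{4}{3}b_2$, $A_{37}=\tfrac{4}{3}b_2$ are then of the \emph{same} order $h_K$ as the diagonal ones, and no permutation of rows and columns makes the matrix diagonally dominant. So the assertion that ``cross-terms carry a factor of $a_{12}$ or $b_{12}$'' is incorrect; many cross-terms survive even on a perfect parallelogram. The paper instead computes $A$ and $D$ explicitly and argues that $A$ is invertible with $A^{-1}$ having entries of order $h_K^{-1}$; from $\beta^v=J_K(\xi_0,\eta_0)A^{-1}D\beta^{\tau_v}$ one obtains $\sum_i(\beta^v_i)^2\lesssim h_K^2\sum_i(\beta^{\tau_v}_i)^2$, and the lower bound then follows from Lemmas~\ref{lem1}--\ref{lem2}. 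Establishing this invertibility with the right scaling is the actual content of the lemma, and it does not follow from the soft structural heuristic you sketched.
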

\begin{proof}
  We follow the same line as in the proof of~\cite{yu2011uniform}. For $\tau\in\Sigma_h$ and $\mathbf{v}\in\mathbf{V}_h$, from (\ref{tauform}) and (\ref{JKV}), it holds that
  $$
  \int_K\tau:\epsilon(\mathbf{v})d\mathbf{x}=(\beta^\tau)^T A\beta^v,
  $$
  where $A=(A_1~~A_2)$ and
  \begin{eqnarray}\nonumber
  &&A_1=\left(
  \begin{array}{cccc}
   4b_2 & -4b_1 & 0 & 0 \\
   0 & 0 & 0 &4a_1  \\
   -4a_2 & 4a_1 &0 &-4b_1  \\
   -\frac{4a_{12}b_1^2}{3J_0} & \frac{4a_{12}b_1b_2-4b_{12}J_0}{3J_0} &
   \frac{4b_2J_0+4a_1b_1^2-4a_2b_2b_1}{3J_0} &
   -\frac{4b_1b_2b_{12}}{3J_0} \\
   -\frac{4a_1a_2a_{12}}{3J_0} &
   \frac{4a_2^2a_{12}}{3J_0} &
   \frac{4a_2(a_1^2-a_2^2)}{3J_0} &
   -\frac{4a_2^2b_{12}}{3J_0} \\
   \frac{4b_{12}J_0+4a_{12}b_1b_2}{3J_0} &
   -\frac{4a_{12}b_2^2}{3J_0} &
   \frac{4a_2b_2^2 - 4a_1b_1b_2 - 4b_1J_0}{3J_0} &
   \frac{4b_2^2b_{12}}{3J_0}\\
   \frac{4a_1^2a_{12}}{3J_0} &
   -\frac{4a_1a_2a_{12}}{3J_0} &
   -\frac{4a_1(a_1^2 - a_2^2)}{3J_0} &
   \frac{4a_{12}J_0+4a_1a_2b_{12}}{3J_0} \\
  \end{array}
  \right),\nonumber\\
  &&A_2=\left(
  \begin{array}{ccc}
    0 & \frac{4b_2}{3} &0\\
   0& 0 &-\frac{4a_2}{3}\\
   0 &-\frac{4a_2}{3} & \frac{4b_2}{3}\\
   -\frac{4b_1(b_1^2 - b_2^2)}{3J_0} &
   \frac{4a_{12}b_1^2}{9J_0} &-\frac{4b_1^2b_{12}}{9J_0}\\
   \frac{4a_1J_0+4b_2a_2^2-4a_1b_1a_2}{3J_0} &
   \frac{4a_1a_2a_{12}}{9J_0} &
   \frac{4a_{12}J_0-4a_1a_2b_{12}}{9J_0}\\
   \frac{4b_2(b_1^2 - b_2^2)}{3J_0} &
   - \frac{4b_{12}J_0-4a_{12}b_1b_2}{9J_0}  &
   \frac{4b_1b_2b_{12}}{9J_0}\\
   \frac{4b_1a_1^2-4a_2b_2a_1-4a_2J_0}{3J_0} &
   -\frac{4a_1^2a_{12}}{9J_0} &
   \frac{4a_1^2b_{12}}{9J_0}
  \end{array}
  \right).\nonumber
   \end{eqnarray}

   By the mean value theorem, there exists a point $(\xi_0,\eta_0)\in[-1,1]^2$ such that
   \begin{equation}
     \Vert\tau\Vert_{0,K}^2=J_k(\xi_0,\eta_0)(\beta^\tau)^TD\beta^\tau,
   \end{equation}
where
   \begin{displaymath}
       D=\left(
       \begin{array}{ccccccc}
4 & 0 & 0 &0 &0 &0& 0\\
0 & 4 & 0 &0 &0 &0 &0\\
0 &0 &8 &0 &0 & 0 &0\\
0 &0 &0 &
\frac{8b_1^4+8b_1^2b_2^2+4J_0^2}{3J_0^2}&
\frac{8a_2b_1(a_1b_1+a_2b_2)}{3J_0^2} &
-\frac{8b_1b_2(b_1^2+b_2^2)}{3J_0^2} &
-\frac{8a_1b_1(a_1b_1+a_2b_2)}{3J_0^2}\\
0 & 0 &0 &
\frac{8a_2b_1(a_1b_1+a_2b_2)}{3J_0^2} &
\frac{8a_1^2a_2^2+8a_2^4+4J_0^2}{3J_0^2}&
-\frac{8a_2b_2(a_1b_1+a_2b_2)}{3J_0^2} &
-\frac{8a_1a_2(a_1^2+a_2^2)}{3J_0^2}\\
0 &0 &0 &
-\frac{8b_1b_2(b_1^2+b_2^2)}{3J_0^2} &
-\frac{8a_2b_2(a_1b_1+a_2b_2)}{3J_0^2} &
\frac{8b_1^2b_2^2+8b_2^4+4J_0^2}{3J_0^2} &
\frac{8a_1b_2(a_1b_1+a_2b_2)}{3J_0^2}\\
0 & 0 &0 &
-\frac{8a_1b_1(a_1b_1+a_2b_2)}{3J_0^2} &
-\frac{8a_1a_2(a_1^2+a_2^2)}{3J_0^2} &
\frac{8a_1b_2(a_1b_1+a_2b_2)}{3J_0^2} &
\frac{8a_1^4+8a_1^2a_2^2+4J_0^2}{3J_0^2}
       \end{array}
       \right).
   \end{displaymath}

   By taking
   \begin{equation}\nonumber
{\tau}=\left( \begin{array}{ccccccc}
1&0&0&\eta&0&\xi&0\\
0&1&0&0&\xi&0&\eta\\
0&0&1&\frac{b_1^2\xi+b_1b_2\eta}{a_1b_2-a_2b_1}&\frac{a_1a_2\xi+a_2^2\eta}{a_1b_2-a_2b_1}&
\frac{b_1b_2\xi+b_2^2\eta}{a_2b_1-a_1b_2}&\frac{a_1^2\xi+a_1a_2\eta}{a_2b_1-a_1b_2}
\end{array}\right)\beta^{\tau,v}
\end{equation}
with
\begin{equation}\label{betatauv}
  \beta^{\tau,v}=\frac{1}{J_K(\xi,\eta)}D^{-1}A\beta^v,
\end{equation}
we immediately obtain
\begin{equation}\label{par1of46}
\int_K\tau_v:\epsilon(\mathbf{v})d\mathbf{x}=\Vert\tau_v\Vert_{0,K}^2
\end{equation}
and
$$
\beta^v=J_K(\xi_0,\eta_0)A^{-1}D\beta^{\tau,v}.
$$

From \textbf{Condition (A)} and (\ref{abc}), we see that each entry of $A$ is
$O(\frac{1}{h})$ and each entry of $D$ is $O(1)$, which implies
$$
\sum\limits_{1\leq i\leq 7}(\beta^v_i)^2\lesssim h_K^2\sum\limits_{1\leq i\leq 7}(\beta^{\tau,v}_i)^2.
$$
Combining this inequality with Lemmas~\ref{lem1}--\ref{lem2} and (\ref{abc}), we obtain
$$
\Vert\tau_v\Vert_{0,K}^2\gtrsim\Vert\epsilon(\mathbf{v})\Vert_{0,K}^2.
$$
\vskip -1cm
\end{proof}
\begin{rem}
It has been shown in~\cite{yu2011uniform} that  Lemma \ref{lem3}  holds when $K$ is a 4-node  quadrilateral, which is corresponding to the hybrid stress elements PS ~\cite{pian1984rational} or ECQ4 ~\cite{xie2004optimization}.
\end{rem}
\begin{propo}\label{thm2}
  Let the partition $\mathcal{T}_h$ satisfy the shape-regularity condition (\ref{geohyp}) and
  \textbf{Condition (A)}, then the uniform discrete inf-sup condition (A2$_h$) holds for the
  5-node hybrid stress transition element.
\end{propo}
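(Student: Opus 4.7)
The plan is to reduce the verification of the uniform inf-sup condition to a Fortin-type construction supplied by Lemma~\ref{lem3}, combined with a broken Korn inequality that handles the nonconformity of $\mathbf{V}_h$. Given $\mathbf{v}\in\mathbf{V}_h$, I would take as the test stress exactly the $\tau_v\in\Sigma_h$ produced by Lemma~\ref{lem3}, so that on each $K$ one has simultaneously
\[
\int_K\tau_v:\varepsilon(\mathbf{v})\,d\mathbf{x}=\Vert\tau_v\Vert_{0,K}^2\gtrsim\Vert\varepsilon(\mathbf{v})\Vert_{0,K}^2.
\]
Summing over $K$ gives $\sum_K\int_K\tau_v:\varepsilon(\mathbf{v})\,d\mathbf{x}=\Vert\tau_v\Vert_0^2$ and also, by Cauchy--Schwarz applied edge-by-edge to the first equality, $\Vert\tau_v\Vert_{0,K}\lesssim\Vert\varepsilon(\mathbf{v})\Vert_{0,K}$, hence $\Vert\tau_v\Vert_0\lesssim\bigl(\sum_K\Vert\varepsilon(\mathbf{v})\Vert_{0,K}^2\bigr)^{1/2}$.

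Next I would form the candidate for the supremum:
\[
\sup_{0\neq\tau\in\Sigma_h}\frac{\sum_K\int_K\tau:\varepsilon(\mathbf{v})\,d\mathbf{x}}{\Vert\tau\Vert_0}\;\geq\;\frac{\Vert\tau_v\Vert_0^2}{\Vert\tau_v\Vert_0}=\Vert\tau_v\Vert_0\gtrsim\Bigl(\sum_{K\in\mathcal{T}_h}\Vert\varepsilon(\mathbf{v})\Vert_{0,K}^2\Bigr)^{1/2}.
\]
It therefore remains to show the piecewise Korn inequality
\[
\Vert\mathbf{v}\Vert_h^2\;\lesssim\;\sum_{K\in\mathcal{T}_h}\Vert\varepsilon(\mathbf{v})\Vert_{0,K}^2,\qquad\forall\,\mathbf{v}\in\mathbf{V}_h.
\]
This is the step I expect to be the main obstacle, because $\mathbf{V}_h$ is nonconforming across 3-node transition edges. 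The key structural fact available is the zero-mean jump property (\ref{key}), $\int_e[\mathbf{w}]\,ds=\mathbf{0}$ on every interior edge, together with the vanishing trace condition on $\Gamma_D$. This is exactly the hypothesis required by the broken Korn inequalities for piecewise $H^1$ vector fields (of the type established by Brenner for nonconforming finite elements), which assert that the $L^2$ norm of the piecewise symmetric gradient controls the full piecewise $H^1$ seminorm whenever the tangential--normal jump moments of order zero vanish across interfaces. I would invoke such a broken Korn result directly; its applicability hinges solely on (\ref{key}) and the shape-regularity (\ref{geohyp}).

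Combining the two estimates produces
\[
\Vert\mathbf{v}\Vert_h\;\lesssim\;\Bigl(\sum_K\Vert\varepsilon(\mathbf{v})\Vert_{0,K}^2\Bigr)^{1/2}\;\lesssim\;\sup_{0\neq\tau\in\Sigma_h}\frac{\sum_K\int_K\tau:\varepsilon(\mathbf{v})\,d\mathbf{x}}{\Vert\tau\Vert_0},
\]
with constants independent of $h$ and of $\lambda$, yielding (A2$_h$). The shape-regularity hypothesis enters through the element geometric estimates (\ref{abc}) used in Lemmas~\ref{lem1}--\ref{lem3}, while \textbf{Condition (A)} enters only through Lemma~\ref{lem3} (it guarantees that the matrix $A$ there has entries of size $O(1/h)$, which is essential for the bound $\sum_i(\beta_i^v)^2\lesssim h_K^2\sum_i(\beta_i^{\tau,v})^2$ to be uniform). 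No additional calculation beyond assembling these three ingredients should be required.
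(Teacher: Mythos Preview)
Your proposal is correct and follows the same route as the paper: both invoke Lemma~\ref{lem3} to produce the test stress $\tau_v$ and then deduce (A2$_h$) from the elementwise identity $\int_K\tau_v:\varepsilon(\mathbf{v})\,d\mathbf{x}=\|\tau_v\|_{0,K}^2\gtrsim\|\varepsilon(\mathbf{v})\|_{0,K}^2$. The paper's argument is terser---it simply writes $\|\tau_v\|_0\|\mathbf{v}\|_h\lesssim\|\tau_v\|_0\bigl(\sum_K\|\varepsilon(\mathbf{v})\|_{0,K}^2\bigr)^{1/2}$ without comment (deferring to~\cite{yu2011uniform}, where however $\mathbf{V}_h$ is conforming)---whereas you correctly isolate this piecewise Korn step as the nontrivial point and supply a justification via the zero-mean jump property~(\ref{key}) and a Brenner-type broken Korn inequality.
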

\begin{proof}
We can get the desired conclusion by following the same line as in the proof of Theorem 4.2 in~\cite{yu2011uniform}.
In fact, from Lemma \ref{lem3}, for any $\mathbf{v}\in\mathbf{V}_h$ there exists some $\tau_v\in\Sigma_h$ such that (\ref{lem3-result}) holds. This means
  \begin{eqnarray*}
||\tau_v||_0||\mathbf{v}||_h&\lesssim&\left( \sum\limits_K\int_K\tau_v:\tau_v d\mathbf{x}\right)^{1/2}\left( \sum\limits_K\int_K\epsilon(\mathbf{v}):\epsilon(\mathbf{v})d\mathbf{x}\right)^{1/2}
\lesssim \sum\limits_K\int_K\ \tau_v:\epsilon(\mathbf{v})d\mathbf{x}.
  \end{eqnarray*}
  Then the stability  (A2$_h$) follows immediately.
\end{proof}

Combining Propositions~\ref{thm1}--\ref{thm2} and
the standard  theory of mixed finite element methods (cf.~\cite{fortin1991mixed}),
we have the following uniform
estimate for the 5-node hybrid stress transition  element:
\begin{thm}\label{thm3}
  Let $(\sigma,\mathbf{u})\in\Sigma\times\mathbf{V}$ be the solution of the variational problem
  (\ref{e1})--(\ref{e2}). Under the conditions of Propositions~\ref{thm1}--\ref{thm2}, the discretization
  problem (\ref{d1})--(\ref{d2}) admits a unique solution $(\sigma_h,\mathbf{u}_h)\in\Sigma_h\times\mathbf{V}_h$
  such that
\begin{equation}
  \|\sigma-\sigma_h\|_0+\Vert\mathbf{u}-\mathbf{u}_h\Vert_h\lesssim\inf\limits_{\tau\in\Sigma_h}
  \|\sigma-\tau\|_0+\inf\limits_{\mathbf{v}\in
  \mathbf{V}_h}\Vert\mathbf{u}-\mathbf{v}\Vert_h+\sup\limits_{\mathbf{w}\in V_h\setminus\{0\}}\frac{
  \vert b(\sigma,\mathbf{w})-b_h(\sigma,\mathbf{w})\vert}{\|\mathbf{w}\|_h}\label{erreq}
\end{equation}
\end{thm}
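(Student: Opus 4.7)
The plan is to treat Theorem~\ref{thm3} as a discrete analogue of Brezzi's theorem for mixed methods, augmented by a second-Strang-lemma correction to account for the nonconformity of $\mathbf{V}_h$. The two ingredients are already in place: the uniform discrete kernel-coercivity (A1$_h$) from Proposition~\ref{thm1}, and the uniform discrete inf-sup condition (A2$_h$) from Proposition~\ref{thm2}. Together they imply, via the standard abstract theory of mixed finite elements, that the discrete saddle-point problem (\ref{d1})--(\ref{d2}) admits a unique solution $(\sigma_h,\mathbf{u}_h)\in\Sigma_h\times\mathbf{V}_h$, and that for any data pair $(L_1,L_2)$ the perturbed problem ``find $(\tilde\sigma,\tilde{\mathbf{u}})\in\Sigma_h\times\mathbf{V}_h$ with $a(\tilde\sigma,\tau)-b_h(\tau,\tilde{\mathbf{u}})=L_1(\tau)$ and $b_h(\tilde\sigma,\mathbf{w})=L_2(\mathbf{w})$'' enjoys the a priori bound
\[
\|\tilde\sigma\|_0+\|\tilde{\mathbf{u}}\|_h\lesssim\sup_{0\neq\tau\in\Sigma_h}\frac{|L_1(\tau)|}{\|\tau\|_0}+\sup_{0\neq\mathbf{w}\in\mathbf{V}_h}\frac{|L_2(\mathbf{w})|}{\|\mathbf{w}\|_h}.
\]

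Next, for arbitrary $\tau_I\in\Sigma_h$ and $\mathbf{v}_I\in\mathbf{V}_h$, I set $e_\sigma:=\sigma_h-\tau_I$ and $e_{\mathbf{u}}:=\mathbf{u}_h-\mathbf{v}_I$. Since $\Sigma_h\subset\Sigma$ and $\mathbf{u}\in\mathbf{V}$ is conforming, for any $\tau\in\Sigma_h$ we have $b_h(\tau,\mathbf{u})=b(\tau,\mathbf{u})$, so subtracting (\ref{d1}) from (\ref{e1}) yields
\[
a(e_\sigma,\tau)-b_h(\tau,e_{\mathbf{u}})=a(\sigma-\tau_I,\tau)-b_h(\tau,\mathbf{u}-\mathbf{v}_I),\qquad \forall\tau\in\Sigma_h.
\]
For the second equation, using $b_h(\sigma_h,\mathbf{w})=F(\mathbf{w})$ and identifying $b(\sigma,\mathbf{w})$ with $F(\mathbf{w})$ (which is how the consistency discrepancy is measured for nonconforming $\mathbf{w}\in\mathbf{V}_h$), I obtain
\[
b_h(e_\sigma,\mathbf{w})=b_h(\sigma-\tau_I,\mathbf{w})+\bigl[b(\sigma,\mathbf{w})-b_h(\sigma,\mathbf{w})\bigr],\qquad \forall\mathbf{w}\in\mathbf{V}_h.
\]

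Applying the abstract estimate to $(e_\sigma,e_{\mathbf{u}})$ with $L_1$ and $L_2$ equal to the right-hand sides above, and controlling them by Cauchy-Schwarz together with the obvious continuity bounds $|a(\sigma,\tau)|\lesssim\|\sigma\|_0\|\tau\|_0$ and $|b_h(\tau,\mathbf{v})|\lesssim\|\tau\|_0\|\mathbf{v}\|_h$, I get
\[
\|e_\sigma\|_0+\|e_{\mathbf{u}}\|_h\lesssim\|\sigma-\tau_I\|_0+\|\mathbf{u}-\mathbf{v}_I\|_h+\sup_{0\neq\mathbf{w}\in\mathbf{V}_h}\frac{|b(\sigma,\mathbf{w})-b_h(\sigma,\mathbf{w})|}{\|\mathbf{w}\|_h}.
\]
Finally, the triangle inequalities $\|\sigma-\sigma_h\|_0\leq\|\sigma-\tau_I\|_0+\|e_\sigma\|_0$ and $\|\mathbf{u}-\mathbf{u}_h\|_h\leq\|\mathbf{u}-\mathbf{v}_I\|_h+\|e_{\mathbf{u}}\|_h$, followed by taking the infimum over $\tau_I\in\Sigma_h$ and $\mathbf{v}_I\in\mathbf{V}_h$, deliver (\ref{erreq}).

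The only nontrivial step is justifying the abstract Brezzi a priori bound in the nonconforming setting: concretely, I would use (A2$_h$) applied to the first perturbed equation to estimate $\|e_{\mathbf{u}}\|_h$ in terms of $\|e_\sigma\|_0$ and the data of $L_1$, then use (A2$_h$) once more to split off the non-kernel component of $e_\sigma$ and control its $L^2$-norm by $L_2$, leaving a remainder in $Z_h$ that is handled by (A1$_h$). Once this packaging is in place, the rest of the argument is a mechanical triangle-inequality step; there is no essential interaction between the $\lambda$-robust constants in (A1$_h$)--(A2$_h$) and the splitting, which is why the final estimate inherits $\lambda$-independence.
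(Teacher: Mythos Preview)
Your proposal is correct and matches the paper's approach: the paper simply invokes Propositions~\ref{thm1}--\ref{thm2} together with the standard theory of mixed finite elements (cf.~\cite{fortin1991mixed}) to obtain Theorem~\ref{thm3}, and what you have written is precisely a detailed unpacking of that standard Brezzi/second-Strang argument. The only minor imprecision is in the last paragraph, where ``use (A2$_h$) once more to split off the non-kernel component of $e_\sigma$'' really means invoking the right-inverse/closed-range consequence of the inf-sup condition to lift $L_2$ to a stress of controlled norm, after which (A1$_h$) handles the $Z_h$-remainder; but this is exactly the standard packaging you allude to.
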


For the consistency error term in the estimate (\ref{erreq}), we have
\begin{equation}
  \vert b(\sigma,\mathbf{w}) - b_h(\sigma,\mathbf{w})\vert
  = \vert (-\text{div}\sigma, \mathbf{w})-b_h(\sigma,\mathbf{w})\vert
  = \Big\vert \sum\limits_{e\in\mathcal{E}_h^*}\int_e\sigma \mathbf{n}_e\cdot[\mathbf{w}]dx \Big\vert,\label{todo}
\end{equation}
where $\mathbf{n}_e$ is the unit outer normal vector along
 $e$.
The work left to us is to  estimate   (\ref{todo}).

Let $\mathcal{T}_h^*$ be the set of all   marco-elements, like $\tilde{K}$ in  Figure \ref{newN}, of $\mathcal{T}_h$.
\begin{figure}[!h]
\begin{center}
\setlength{\unitlength}{1.5cm}
\begin{picture}(2,2)
\put(0,0.5){\line(0,1){1}} \put(0,0.5){\line(4,-1){2}}
\put(0,1.5){\line(2,1){1}} \put(1,2){\line(1,-2){1}}
\put(0,1){\line(1,0){1.5}} \put(1,0.25){\line(-1,3){0.25}}
\put(0.75,1){\circle*{0.1}}
\put(0.30,1.05){$e_1$}
\put(1.00,1.05){$e_2$}
\put(0.25,0.60){$K_1^1$}
\put(1.15,0.50){$K_1^2$} \put(0.60,1.25){$K_2$}
\put(-0.35,0.90){$\tilde{K}$}
\end{picture}
\end{center}
\vskip -10pt
\caption{micro-element $\tilde{K}$}\label{newN}
\end{figure}
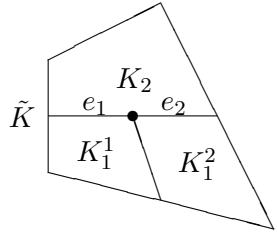
Following the same procedure as in~\cite{Huang2010}, we have the following estimate for the consistency error term.
\begin{lem}\label{lem4}
It holds that
\begin{equation}
    \sup\limits_{\mathbf{w}\in \mathbf{V_h}\setminus\{0\}}\frac{\vert b(\sigma,\mathbf{w}) - b_h(\sigma,\mathbf{w})\vert}{\|\mathbf{w}\|_h}
  \leq h|\sigma|_{1,\mathcal{T}_h^*},\label{est3Pro}
\end{equation}
where $|\sigma|_{1,\mathcal{T}_h^*}:=\Big(\sum\limits_{\tilde{K}\in\mathcal{T}_h^*}|\sigma|_{1,\tilde{K}}^2\Big)^{1/2}$.
\end{lem}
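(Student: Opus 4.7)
The plan is to estimate the consistency error edge by edge, using the crucial orthogonality (\ref{key}) to subtract a constant from $\sigma\mathbf n_e$, then splitting the resulting integral via Cauchy--Schwarz and bounding each factor by an $O(h^{1/2})$ quantity through scaling/Bramble--Hilbert arguments on the macro-element $\tilde K$.

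Starting from (\ref{todo}), I would fix $e\in\mathcal E_h^*$ and use (\ref{key}) to write, for the edge-average $\overline{\sigma\mathbf n_e}:=\frac{1}{|e|}\int_e\sigma\mathbf n_e\,ds$,
\begin{equation*}
\int_e\sigma\mathbf n_e\cdot[\mathbf w]\,ds=\int_e(\sigma\mathbf n_e-\overline{\sigma\mathbf n_e})\cdot[\mathbf w]\,ds\le\|\sigma\mathbf n_e-\overline{\sigma\mathbf n_e}\|_{0,e}\,\|[\mathbf w]\|_{0,e}.
\end{equation*}
For the first factor, I would transport to the reference configuration associated with the macro-element $\tilde K$, apply the standard trace inequality together with Deny--Lions (the functional $v\mapsto\int_{\hat e}v\,d\hat s$ is non-trivial on constants), and scale back. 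Under the shape-regularity (\ref{geohyp}) this yields the canonical bound
\begin{equation*}
\|\sigma\mathbf n_e-\overline{\sigma\mathbf n_e}\|_{0,e}\lesssim h^{1/2}|\sigma|_{1,\tilde K}.
\end{equation*}

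The main obstacle is bounding $\|[\mathbf w]\|_{0,e}$, because $[\mathbf w]$ is defined only piecewise and the zero-mean property from (\ref{key}) holds on the full edge $e=e_1\cup e_2$, not on each $e_i$ separately. I would circumvent this as follows. Denoting $\mathbf w_2:=\mathbf w|_{K_2}$ and $\mathbf w_1$ the (continuous, since $K_1^1,K_1^2$ share a conforming edge inside $\tilde K$) function that equals $\mathbf w|_{K_1^i}$ on $e_i$, the identity $\int_e[\mathbf w]\,ds=0$ gives a common edge-average $\bar c:=\frac{1}{|e|}\int_e\mathbf w_2\,ds=\frac{1}{|e|}\int_e\mathbf w_1\,ds$. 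Then
\begin{equation*}
\|[\mathbf w]\|_{0,e}\le\|\mathbf w_2-\bar c\|_{0,e}+\|\mathbf w_1-\bar c\|_{0,e}.
\end{equation*}
Each term now satisfies the edge zero-mean condition with respect to a genuine $H^1$-function on $K_2$ (resp.\ $K_1^1\cup K_1^2$), so combining the trace inequality $\|v\|_{0,e}\lesssim h^{-1/2}\|v\|_{0,K}+h^{1/2}|v|_{1,K}$ with the Poincar\'e estimate $\|v-\bar v\|_{0,K}\lesssim h|v|_{1,K}$ (after controlling $|\bar v_K-\bar c|$ by $|\mathbf w|_{1,K}$ via the same trace inequality) yields
\begin{equation*}
\|[\mathbf w]\|_{0,e}\lesssim h^{1/2}|\mathbf w|_{1,\tilde K}.
\end{equation*}

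Finally, I would multiply the two edge estimates, sum over $e\in\mathcal E_h^*$, and apply discrete Cauchy--Schwarz. Since each quadrilateral belongs to only a bounded number of macro-elements in $\mathcal T_h^*$,
\begin{equation*}
|b(\sigma,\mathbf w)-b_h(\sigma,\mathbf w)|\lesssim h\Bigl(\sum_{\tilde K\in\mathcal T_h^*}|\sigma|_{1,\tilde K}^2\Bigr)^{1/2}\Bigl(\sum_{\tilde K}|\mathbf w|_{1,\tilde K}^2\Bigr)^{1/2}\lesssim h\,|\sigma|_{1,\mathcal T_h^*}\,\|\mathbf w\|_h,
\end{equation*}
which, after dividing by $\|\mathbf w\|_h$ and taking the supremum, gives (\ref{est3Pro}). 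The delicate step is really the macro-element argument for $\|[\mathbf w]\|_{0,e}$, since the zero-mean holds only globally on $e$; the trick of extracting the common constant $\bar c$ from the identity in (\ref{key}) is precisely what unlocks the optimal $h^{1/2}$-order bound here.
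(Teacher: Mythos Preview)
Your proposal is correct and follows essentially the same route as the paper: both exploit (\ref{key}) to insert a common constant $\bar c=\bar{\mathbf w}^e$ into the jump and a constant into $\sigma\mathbf n_e$, obtain two $O(h^{1/2})$ edge bounds via trace/Poincar\'e estimates on the macro-element $\tilde K$, and then sum over $\mathcal E_h^*$ with Cauchy--Schwarz. The only cosmetic difference is that the paper subtracts the volume average $\frac{1}{|\tilde K|}\int_{\tilde K}\sigma\mathbf n_e\,d\mathbf x$ rather than the edge average you use, and records the double subtraction in a single line.
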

\begin{proof}
As shown in Figure \ref{newN}, we denote
\begin{eqnarray}
K_1:=K_1^1\bigcup K_1^2, \quad  \tilde{K}:=K_1\bigcup K_2,
    \end{eqnarray}
$$
 e:=e_1\bigcup e_2 \text{ with } e_i:=\overline{K_1^i}\bigcap \overline{K_2} \text{ for } i=1,2,
$$
$$  \mathbf{w}_i^e = \left(\mathbf{w}|_{K_i}\right)|_e \text{ for } i=1,2, \quad \bar{\mathbf{w}}^e = \frac{1}{|\tilde{e}|}\int_{e} \mathbf{w}_1^e\mbox{d}x.
$$
By (\ref{key}) it also holds
$
\bar{\mathbf{w}}^e=\frac{1}{|\tilde{e}|}\int_{e} \mathbf{w}_2^e\mbox{d}x.
$
 Standard scaling arguments, together with trace inequality, yields
\begin{eqnarray}\label{w-estimate}
  \vert \mathbf{w}-\bar{\mathbf{w}}^e\vert_e\lesssim h^{1/2}\vert\mathbf{w}\vert_{1,\tilde{K}}.
\end{eqnarray}

For any $\zeta\in H^1(\tilde{K})^2$,
from   trace inequality
and   Poincar$\acute{e}$ inequality it follows
\begin{eqnarray}\label{zeta-estimate}
  |\zeta - \bar{\zeta}|_{0,e} \lesssim&
  h^{1/2}|\zeta|_{1,\tilde{K}},
\end{eqnarray}
where
$
\bar{\zeta}:=\frac{1}{|\tilde{K}|}\int_{\tilde{K}} \zeta\mbox{d}x.
$
The estimates (\ref{w-estimate})--(\ref{zeta-estimate}), together with (\ref{key}), imply
\begin{eqnarray}
  \Big|\int_e \zeta [\mathbf{w}]\mbox{ds}\Big|&=&\Big|\int_e(\zeta - \bar{\zeta}) [\mathbf{w} - \bar{\mathbf{w}}^e]\mbox{ds}\Big| \lesssim h|\zeta|_{1,\tilde{K}}\vert\mathbf{w}\vert_{1,\tilde{K}}.\nonumber\label{order-loss}
\end{eqnarray}
Taking $\zeta=\sigma\mathbf{n}_e$ in the above inequality and summing over all $ e\in \mathcal{E}_h^*$,  we obtain
\begin{eqnarray}
  \Big|\sum\limits_{e\in\mathcal{E}_h^*}\int_e\sigma\mathbf{n}_e[{\mathbf{w}}]\mbox{ds}\Big|\lesssim
  \sum\limits_{\tilde{K}\in\mathcal{T}_h^*}h|\sigma|_{1,\tilde{K}}\vert\mathbf{w}\vert_{1,\tilde{K}}\leq
  h|\sigma|_{1,\mathcal{T}_h^*}\|\mathbf{w}\|_{h},\nonumber
\end{eqnarray}
which yields the desired result
(\ref{est3Pro}).
\vskip -1cm
\end{proof}

From Theorem \ref{thm3}, Lemma \ref{lem4}, and the standard interpolation theory, we have the following uniform a priori error estimation.
\begin{thm}
    Let $(\sigma,\mathbf{u})\in\Sigma\bigcap H^1(\Omega;\mathbb{R}^{2\times2}_{sym})\times\mathbf{V}\bigcap H^2(\Omega)^2$ and
    $(\sigma_h,\mathbf{u}_h)\in\Sigma_h\times\mathbf{V}_h$  be respectively the solutions of the weak problem
  (\ref{e1})--(\ref{e2}) and of the discretized problem (\ref{d1})--(\ref{d2}).
  Under the same assumptions of Theorem~\ref{thm3} it holds
\begin{equation*}
    \|\mathbf{\sigma}-\mathbf{\sigma}_h\|_0+\|\mathbf{u}-\mathbf{u}_h\|_h\lesssim h(|\mathbf{\sigma}|_1
  +|\mathbf{u}|_2).
\end{equation*}
\end{thm}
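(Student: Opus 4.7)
The plan is to combine the abstract error estimate (\ref{erreq}) of Theorem \ref{thm3} with the consistency bound of Lemma \ref{lem4} and standard interpolation estimates in both $\Sigma_h$ and $\mathbf{V}_h$. Once each of the three terms on the right-hand side of (\ref{erreq}) is bounded by $h(|\sigma|_1+|\mathbf{u}|_2)$ with a constant independent of $h$ and the Lam\'e parameter $\lambda$, the stated estimate follows by addition.

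For the stress best-approximation term $\inf_{\tau\in\Sigma_h}\|\sigma-\tau\|_0$, I would exploit the structure of the stress mode: the first three columns of $M_7$ in (\ref{tauform}) (respectively of (\ref{tauform4ps}) in the 4-node case) form the $3\times 3$ identity, so choosing $\beta^\tau=(\beta_1,\beta_2,\beta_3,0,\ldots,0)^T$ generates an arbitrary constant symmetric tensor on each $K$. Consequently, the space of piecewise constant symmetric tensors sits inside $\Sigma_h$, and the elementwise $L^2$-projection provides a candidate $\tau$. A Bramble--Hilbert argument on the reference square $\hat K$, combined with the shape regularity (\ref{geohyp}) and the geometric bounds (\ref{a0})--(\ref{abc}) to rescale, yields
\begin{equation*}
\inf_{\tau\in\Sigma_h}\|\sigma-\tau\|_0 \;\lesssim\; h\,|\sigma|_1 .
\end{equation*}

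For the displacement term, I would take $I_h\mathbf{u}\in\mathbf{V}_h$ to be the nodal interpolation associated with the basis (\ref{N1-4})--(\ref{N5-8}), pulled back from $\hat K$ via $F_K$. Since on $\hat K$ the span of $\{N_1,\ldots,N_8\}$ contains the bilinear polynomials, $I_h$ reproduces bilinears on each element. A standard reference-to-physical scaling argument exploiting (\ref{abc}) and the boundedness of the nodal interpolation operator on $H^2(\hat K)$ then delivers
\begin{equation*}
\inf_{\mathbf{v}\in\mathbf{V}_h}\|\mathbf{u}-\mathbf{v}\|_h \;\le\; \|\mathbf{u}-I_h\mathbf{u}\|_h \;\lesssim\; h\,|\mathbf{u}|_2 .
\end{equation*}
Invoking Lemma \ref{lem4} for the consistency term and noting that $|\sigma|_{1,\mathcal{T}_h^*}\le|\sigma|_1$, the three contributions combine in (\ref{erreq}) to give the advertised bound. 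Uniformity in $\lambda$ is automatic since none of the ingredients--the abstract estimate (\ref{erreq}), the interpolation bounds, and the consistency bound--involves $\lambda$.

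The main delicacy I anticipate is verifying carefully that $I_h$ reproduces bilinear polynomials on transition elements: the bubble corrections at 3-node edges modify the vertex basis (\ref{N1-4}) through the half-weights $-\tfrac{1}{2}\tilde N_i$, and one must confirm that these cancel with the midnode contributions $N_i=\Delta_i\tilde N_i$ when the interpolate of a bilinear polynomial is formed. Once this reproduction property and the $H^1(\hat K)$-continuity of $I_h$ are in hand, the standard Bramble--Hilbert machinery applies in exactly the same way as for the isoparametric bilinear element, and the proof concludes.
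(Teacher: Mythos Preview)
Your proposal is correct and follows precisely the route the paper indicates: the paper gives no explicit proof, stating only that the result follows ``from Theorem~\ref{thm3}, Lemma~\ref{lem4}, and the standard interpolation theory,'' and your argument fills in exactly those three ingredients. Two minor points: the bound $|\sigma|_{1,\mathcal{T}_h^*}\le|\sigma|_1$ should strictly be $\lesssim$ since macro-elements $\tilde K$ may overlap (with bounded multiplicity), and for the ECQ4 stress mode (\ref{tauform4ecq4}) the first three columns are not literally the identity, though piecewise constants are still approximated to order $h$; neither issue affects the outcome.
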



\section{An adaptive algorithm for quadrilateral meshes}
\setcounter{equation}{0}

Inspired by the coarsening algorithm in~\cite{Chen2010}, we introduce new refinement/coarsening algorithms for quadrilateral meshes. Unlike the classical recursive refinement/coarsening procedures, the proposed
algorithms are non-recursive and require neither storing nor maintaining refinement
tree information such as the parents, brothers, generation, etc.
The main idea is using a special ordering of the elements in the data structure.
This also makes the implementation easier. We note that the algorithms for quadrilateral meshes are considerably more complicated than their triangular counterparts owing to the existence of hanging nodes.

\subsection{Data structures}

Our basic data structure for quadrilateral meshes contains five arrays, \texttt{node(1:N,1:2)}, \texttt{node$\_$flag(1:N,1)}, \texttt{edge(1:NE,1:2)}, \texttt{edge$\_$flag(1:NE,1)}, and \texttt{elem(1:NT,1:12)}, where \texttt{N} is the number of vertices,
\texttt{NE} is the number of edges, and \texttt{NT} is the number of elements.

In the node array \texttt{node}, the first and second columns contain $x-$ and $y-$coordinates of the nodes in the mesh; see Table~\ref{node}. In the \texttt{node$\_$flag}, it contains the flags for the nodes:
\texttt{`0'} for regular nodes, \texttt{`-1'} for ``newest'' nodes, \texttt{`-2'} for boundary nodes (one could define more flags such as Dirichlet Boundary, Neumann Boundary etc). A ``newest'' node refers to the internal point generated by a refinement of a quadrilateral element.
 In the edge array \texttt{edge}, the two columns contain indices to the vertices of the edge; see Table~\ref{edge}.
 In \texttt{edge$\_$flag}, the only column contains the flags for the edges:
\texttt{`-2'} for boundary edge, \texttt{`0'} for regular edge, \texttt{`$2e_f$'} if the index of this edge is bigger than its brother,
 \texttt{`$2e_f-1$'}-if index of the edge is smaller than its brother, where \texttt{`$e_c$'} is the index of ``father'' edge .
In the element array \texttt{elem}, the first four columns contain indices to the vertices of elements, the \texttt{5-8th} columns contain indices to the mid-nodes of edges of elements (\texttt{`0'}-if there is no mid-node), and the \texttt{9-12th} columns contains the edges of elements.

\begin{rem}\rm
As an
example, \texttt{node}, \texttt{node$\_$flag}, \texttt{edge}, \texttt{edge$\_$flag}, and \texttt{elem} matrices to represent a triangulation of the L-shaped domain
$\Omega=(-1, -1) \times (1, 1)\backslash([-1, 0] \times [-1, 0])$ are given in Figure~\ref{mesh4paper2do}
and Table~\ref{node}--\ref{elem}.
\end{rem}

\begin{figure}[!h]
  \centering
  \includegraphics[keepaspectratio,width=9cm]{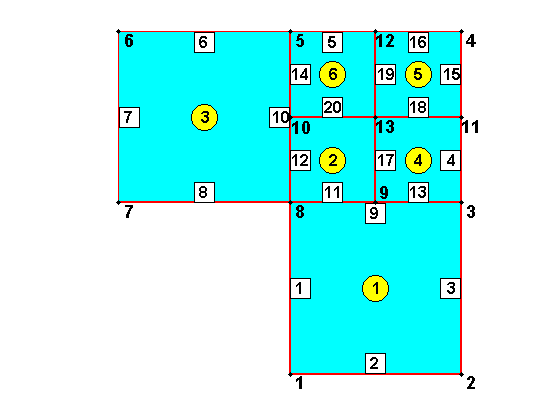}\\
  \vskip -0.7cm
  \caption{Quadrilateral mesh of L-shaped domain}\label{mesh4paper2do}
\end{figure}

\begin{table}[!h]
\begin{center}
\begin{tiny}
\begin{tabular}{c}
\texttt{x} \\
\texttt{y} \\
\texttt{flag}\\
\\
\end{tabular}
\begin{tabular}{*{13}{|c}{|}}
 \hline
0 & 1 & 1 & 1 & 0 & -1 & -1 & 0 & 0.5 & 0 & 1 & 0.5 & 0.5\\
 \hline
-1 & -1 & 0 & 1 & 1 & 1 & 0 & 0 & 0 & 0.5 & 0.5 & 1 & 0.5\\
 \hline
-2 & -2 & -2 & -2 & -2 & -2 & -2 & -2 & 0 & 0 & -2 & -2 & -1\\
 \hline
\multicolumn{1}{c}{1} &
\multicolumn{1}{c}2 &
\multicolumn{1}{c}3 &
\multicolumn{1}{c}4 &
\multicolumn{1}{c}5 &
\multicolumn{1}{c}6 &
\multicolumn{1}{c}7 &
\multicolumn{1}{c}8 &
\multicolumn{1}{c}9 &
\multicolumn{1}{c}{10} &
\multicolumn{1}{c}{11} &
\multicolumn{1}{c}{12} &
\multicolumn{1}{c}{13}\\
\end{tabular}
\end{tiny}
\end{center}
\vskip -0.7cm
\caption{\texttt{node} $\&$ \texttt{node$\_$flag} : coordinates and flag for each node}\label{node}
\end{table}

\begin{table}[!h]
\begin{tiny}
\begin{tabular}{c}
$\texttt{node}_1$ \\
$\texttt{node}_2$ \\
\texttt{flag} \\
\\
\end{tabular}
\begin{tabular}{|c|c|c|c|c|c|c|c|c|c|c|c|c|c|c|c|c|c|c|c|}
 \hline
1 & 1 & 2 & 3 & 5 & 6 & 7 & 7 & 8 & 8 & 8 & 8 & 9 & 10 & 11 & 12 & 9 & 13 & 13 & 10\\
 \hline
8 & 2 & 3 & 11 & 12 & 5 & 6 & 8 & 3 & 5 & 9 & 10 & 3 & 5 & 4 & 4 & 13 & 11 & 12 & 13\\
 \hline
 -2 & -2 & -2 & -2 & -2 & -2 & -2 & -2 & 0 & 0 & 17 & 19 & 18 & 20 & -2 & -2 & 0 & 0 & 0 & 0\\
 \hline
 \multicolumn{1}{c}1 &
 \multicolumn{1}{c}2 &
 \multicolumn{1}{c}3 &
 \multicolumn{1}{c}4 &
 \multicolumn{1}{c}5 &
 \multicolumn{1}{c}6 &
 \multicolumn{1}{c}7 &
 \multicolumn{1}{c}8 &
 \multicolumn{1}{c}9 &
 \multicolumn{1}{c}{10} &
 \multicolumn{1}{c}{11} &
 \multicolumn{1}{c}{12} &
 \multicolumn{1}{c}{13} &
 \multicolumn{1}{c}{14} &
 \multicolumn{1}{c}{15} &
 \multicolumn{1}{c}{16} &
 \multicolumn{1}{c}{17} &
 \multicolumn{1}{c}{18} &
 \multicolumn{1}{c}{19} &
 \multicolumn{1}{c}{20}
\end{tabular}
\end{tiny}
\vskip -0.4cm
\caption{\texttt{edge} $\&$ \texttt{edge$\_$flag} : edge end-point indices and edge flags}\label{edge}
\end{table}

\begin{table}[!h]
\centering
\begin{tiny}
\begin{tabular}{c}
$\texttt{elem}_1$ \\
$\texttt{elem}_2$ \\
$\texttt{elem}_3$ \\
$\texttt{elem}_4$ \\
$\texttt{elem}_5$ \\
$\texttt{elem}_6$ \\
\\
\end{tabular}
\begin{tabular}{|c|c|c|c|c|c|c|c|c|c|c|c|}
 \hline
1 & 2 & 3 & 8 & 0 & 0 & 9 & 0 & 2 & 3 & 9 & 1\\
 \hline
8 & 9 & 13 & 10 & 0 & 0 & 0 & 0 & 11 & 17 & 20 & 12\\
 \hline
7 & 8 & 5 & 6 & 0 & 10 & 0 & 0 & 8 & 10 & 6 & 7\\
 \hline
9 & 3 & 11 & 13 & 0 & 0 & 0 & 0 & 13 & 4 & 18 & 17\\
 \hline
13 & 11 & 4 & 12 & 0 & 0 & 0 & 0 & 18 & 15 & 16 & 19\\
 \hline
10 & 13 & 12 & 5 & 0 & 0 & 0 & 0 & 20 & 19 & 5 & 14\\
 \hline
 \multicolumn{1}{c}\texttt{n}$_1$ &
 \multicolumn{1}{c}\texttt{n}$_2$ &
 \multicolumn{1}{c}\texttt{n}$_3$ &
 \multicolumn{1}{c}\texttt{n}$_4$ &
 \multicolumn{1}{c}\texttt{n}$_5$ &
 \multicolumn{1}{c}\texttt{n}$_6$ &
 \multicolumn{1}{c}\texttt{n}$_7$ &
 \multicolumn{1}{c}\texttt{n}$_8$ &
 \multicolumn{1}{c}\texttt{e}$_1$ &
 \multicolumn{1}{c}\texttt{e}$_2$ &
 \multicolumn{1}{c}\texttt{e}$_3$ &
 \multicolumn{1}{c}\texttt{e}$_4$
\end{tabular}
\end{tiny}
\vskip -0.4cm
\caption{\texttt{elem} : node and edge indices of each element}\label{elem}
\end{table}

For convenience of implementation, we also introduce two auxiliary arrays: \texttt{edge2elem} and \texttt{node2elem}. \texttt{edge2elem} (Table~\ref{edge2elem}) is a sparse matrix in IJ-format,
whose rows and columns denote the indices of elements and edges, respectively.
The $(i,j)$-entry of the matrix denotes the local index of the $j$-th \texttt{edge} in  the $i$-th \texttt{elem}.
\texttt{node2elem} (Table~\ref{node2elem}) is a sparse pattern of a sparse matrix in IJ-format, whose rows and columns denote the indices of elements and nodes respectively.

If an element contains a certain edge/node,
we say that this element is an adjacent element of the edge/node.
We now define ``good-for-coarsening'' or ``good'' node as a newest node whose adjacent elements have no hanging node. In other word, the \texttt{node$\_$flag} for a ``good'' node is `\texttt{-1}', and the \texttt{n$\_$5},...,\texttt{n$\_$8} (in \texttt{node2elem}) are all `\texttt{0}' for its adjacent elements.

\begin{table}[!h]
\centering
\begin{tiny}
\begin{tabular}{|c|c|c|c|c|c|c|c|c|c|c|c|c|c|c|c|c|c|c|c|c|c|c|c|}
 \hline
1 & 1 & 1 & 4 & 6 & 3 & 3 & 3 & 1 & 3 & 2 & 2 & 4 & 6 & 5 & 5 & 2 & 4 & 4 & 5 & 5 & 6 & 2 & 6\\
 \hline
1 & 2 & 3 & 4 & 5 & 6 & 7 & 8 & 9 & 10 & 11 & 12 & 13 & 14 & 15 & 16 & 17 & 17 & 18 & 18 & 19 & 19 & 20 & 20\\
 \hline
4 & 1 & 2 & 2 & 3 & 3 & 4 & 1 & 3 & 2 & 1 & 4 & 1 & 4 & 2 & 3 & 2 & 4 & 3 & 1 & 4 & 2 & 3 & 1\\
 \hline
\end{tabular}
\end{tiny}
\vskip -0.4cm
 \caption{\texttt{edge2elem} : the first two rows denote the indices of elements and edges, respectively; the third row denotes the corresponding local indices of the edges in the element}\label{edge2elem}
\end{table}

\begin{table}[!h]
\centering
\begin{tiny}
\begin{tabular}{|c|c|c|c|c|c|c|c|c|c|c|c|c|c|c|c|c|c|c|c|c|c|c|c|}
 \hline
1 & 1 & 1 & 4 & 5 & 3 & 6 & 3 & 3 & 1 & 2 & 3 & 2 & 4 & 2 & 6 & 4 & 5 & 5 & 6 & 2 & 4 & 5 & 6\\
 \hline
1 & 2 & 3 & 3 & 4 & 5 & 5 & 6 & 7 & 8 & 8 & 8 & 9 & 9 & 10 & 10 & 11 & 11 & 12 & 12 & 13 & 13 & 13 & 13\\
 \hline
\end{tabular}
\end{tiny}
\vskip -0.4cm
\caption{\texttt{node2elem} : indices of elements and nodes}\label{node2elem}
\end{table}

\subsection{Refinement and coarsening algorithms}

Due to the fact that we are going to implement the algorithms in Matlab, we avoid to perform refinement/coarsening element by element.
Instead, we mark edges of all marked element, categorize these edges, and perform vectorized operations for each case. Our algorithms work as follows:
\begin{center}
\textbf{mesh + indices of marked \texttt{elem} $ \stackrel{\text{refine/coarsen}}{\xrightarrow{\hspace*{2cm}}}$ new mesh }.
\end{center}

Before refining a quadrilateral mesh, we need a post-marking step in order to make sure that there will be no more than one hanging node on each edge after the refinement.
We use \texttt{edge$\_$m} to denote the indices of marked edges,
and \texttt{elem$\_$m} to denote the indices of marked elements.
In this post-marking procedure, we first get \texttt{edge$\_$m} from \texttt{elem$\_$m}.
Then we find the edges with
``hanging'' node from \texttt{edge$\_$m}, and we use \texttt{edge$\_$hg}
to denote the indices of these edges. Finally, based on \texttt{edge2elem},
we find all the elements who contain \texttt{edge$\_$hg}. By adding these elements to \texttt{elem$\_$m} we
get a new \texttt{elem$\_$m}. If the new and old \texttt{elem$\_$m} are the same, then
the post-marking procedure terminates, otherwise we do this procedure iteratively.


Now we categorize marked edges (for refinement or coarsening) into several different types. We use \texttt{elem\_adj} to denote the neighboring element(s) of marked edges, \texttt{elem2remove} to denote the elements
which will be removed by coarsening.

\noindent (1) For refinement
\begin{itemize}[leftmargin=.75in]
\item[ Type 1:]  no hanging node and $\texttt{edge$\_$flag} = \texttt{0}$ belongs to one \texttt{edge$\_$m}.
\item[ Type 2:]  no hanging node and $\texttt{edge$\_$flag} = \texttt{0}$ belongs to two \texttt{edge$\_$m} or on the boundary.
\item[ Type 3:]  no hanging node and $\texttt{edge$\_$flag} > \texttt{0}$ is an odd number.
\item[ Type 4:]  no hanging node and $\texttt{edge$\_$flag} > \texttt{0}$ is an even number.
\item[ Type 5:]  with one hanging node.
\end{itemize}

\noindent (2) For coarsening
\begin{itemize}[leftmargin=.75in]
\item[ Type 1:] has two \texttt{elem\_adj}, and only one of them belongs to \texttt{elem2remove}.
\item[ Type 2:] has  one \texttt{elem\_adj}, but not on the boundary.
\item[ Type 3:] has two \texttt{elem\_adj}, and both of them belong to \texttt{elem2remove} or on the boundary.
\end{itemize}

%
%
%
%
%
%

The algorithm for refinement/coarsening can be found in \textbf{Algorithms \ref{refine}--\ref{coarsen}}.
To make the algorithms more accessible by readers, we use the mesh in Figure~\ref{mesh4paper2do} as an example to explain the edge types.
Let e$_i$ be the $i$--th \texttt{edge} and E$_i$ be the $i$--th \texttt{elem}. For the refinement algorithm, we can see that
\begin{itemize}
  \item If E$_6$ is marked to be refined, but not for E$_5$, then e$_{19}$ belongs to Type 1.
  \item If E$_6$ and E$_5$ are both marked to be refined, then e$_{19}$ belongs to Type 2.
  \item If E$_2$ is marked to be refined, then e$_{12}$ belongs to Type 3.
  \item If E$_6$ is marked to be refined, then e$_{14}$ belongs to Type 4.
  \item If E$_3$ is marked to be refined, then e$_7$ belongs to Type 2 and e$_{10}$ belongs to Type 5.
\end{itemize}
On the other hand, for the coarsen procedure, we have
\begin{itemize}
  \item If E$_6$ is marked for coarsening, but not for E$_5$, then e$_{19}$ belongs to Type 1.
  \item If E$_6$ is marked for coarsening, but not for E$_3$, then e$_{14}$ belongs to Type 2.
  \item If E$_6$ and E$_5$ are both marked for coarsening, then e$_{19}$ belongs to Type 3.
  \item If E$_3$ is marked for coarsening, then e$_7$ belongs to Type 3.
\end{itemize}

Now we present the algorithms for refinement and coarsening.

\begin{algorithm}[!h]
  \caption{REFINE$(mesh, \texttt{elem$\_$m})$}\label{refine}
  \begin{algorithmic}[l]
            \State  1. Categorize the edges need to be refined and save as \texttt{edge}$\_$\texttt{m}.
            \State  2. Update mesh info based on the type of \texttt{edge$\_$m}:

\quad Type 1: add one node, two edges, and update adjacent element.

\quad Type 2: add one node, one edge.

\quad Type 3: add one node, two edges, and update \texttt{edge$\_$flag} information.

\quad Type 4: add one node, two edges, and update \texttt{edge$\_$flag} information.

\quad Type 5: update the elements in the patch of the edge.
  \end{algorithmic}
\end{algorithm}

\begin{algorithm}[!h]
  \caption{COARSEN$(mesh, \texttt{elem$\_$m})$}\label{coarsen}
  \begin{algorithmic}[l]
            \State 1. Find ``good'' nodes.
            \State 2. Mark all elements who contain the ``good'' nodes as \texttt{elem2remove}.
            \State 3. Categorize all the edges of \texttt{elem2remove}.
            \State 4. Update mesh info based on the type

\quad Type 1 : add one new edge, update \texttt{edge} info for edges belong to Type 1 $\&$ 2.

\quad Type 2 : remove one node, update \texttt{elem} info for corresponding \texttt{elem\_adj}.

\quad Type 3 : remove one node, update \texttt{edge} info for edges belong to Type 3.
  \end{algorithmic}
\end{algorithm}

%
%
%
%
%
%
%
%
%
%

%
%
%
%
%
%
%
%

\subsection{Adaptive algorithm}

We are now ready to present the adaptive algorithm for discrete problem (\ref{d1})--(\ref{d2}) with
the transition hybrid stress element. The adaptive algorithm is given in \textbf{Algorithm \ref{afemA}}.

\begin{algorithm}[!h]
  \caption{AFEM}\label{afemA}
  \begin{algorithmic}[l]
            \State
            FOR $l=0,1,2,...$ UNTIL termination on level $L$, DO:
\begin{enumerate}
  \item
  Solve the discrete problem (\ref{d1})--(\ref{d2}) on $\mathcal{T}_l$;

 \item
  Compute $\eta_N:=(\sum\limits_{K\in\mathcal{T}_l}\eta_K^2)^{1/2}$ with
  $\eta_K:= \vert H_K\vert^{1/2}\vert K\vert,\forall K \in \mathcal{T}_l$ as error indicators~\cite{chen2007optimal},
  where $H_K=\frac{1}{|K|}\int_K\text{diag}(\text{abs}(\text{svd}(\nabla\sigma_{h,K})))dx+10^{-8}I$;

  \item
  Mark a set of elements $M_l$ in $T_l$ with minimal cardinality such that
  $
  \sum\limits_{K\in\mathcal{M}_l}\eta_K^2>\frac{1}{2}\eta_N^2;
  $

  \item Refine $\mathcal{T}_l$ to obtain $\mathcal{T}_{l+1}$.
\end{enumerate}
  \end{algorithmic}
\end{algorithm}

%
%
%
%

\section{Numerical examples}
\setcounter{equation}{0}

In the following numerical examples, we use MATLAB (R2011a) to implement the algorithms, and the experimental platform is a desktop with Intel Xeon E5640@2.67GHz CPU and CentOS 6.5.

\subsection{Poisson's equation on an $L-$shaped domain}\label{vs1}

The domain is as in Figure~\ref{mesh4paper2do},
where $\Omega=[-1,1]^2\backslash[-1,0]^2,\
u=r^{\frac{2}{3}}\sin((2\theta+\pi)/3)$
(polar coordinate),
$\Gamma_D=\partial\Omega,\ f=0$. We call the standard h-refinement adaptive method~\cite{dorfler1996convergent}
to solve the Poisson problem with the refinement algorithm proposed in the previous section for quadrilateral meshes. And we compare its performance with the bisection algorithm for triangular meshes in iFEM~\cite{Chen2008}.
Figures~\ref{aa1}--\ref{aa4} and Table~\ref{vsvs2} compare the two algorithms.
For Table~\ref{vsvs2}, we start from initial meshes of the same mesh size, and run the adaptive algorithm until the error $\vert u-u_h\vert_1< 10^{-3}$.
Here and in what following, we use DOF to denote the degree of freedom.

\begin{figure}[!h]
  \centering
  \begin{subfigure}{0.49\textwidth}
  \includegraphics[keepaspectratio,width=8cm]{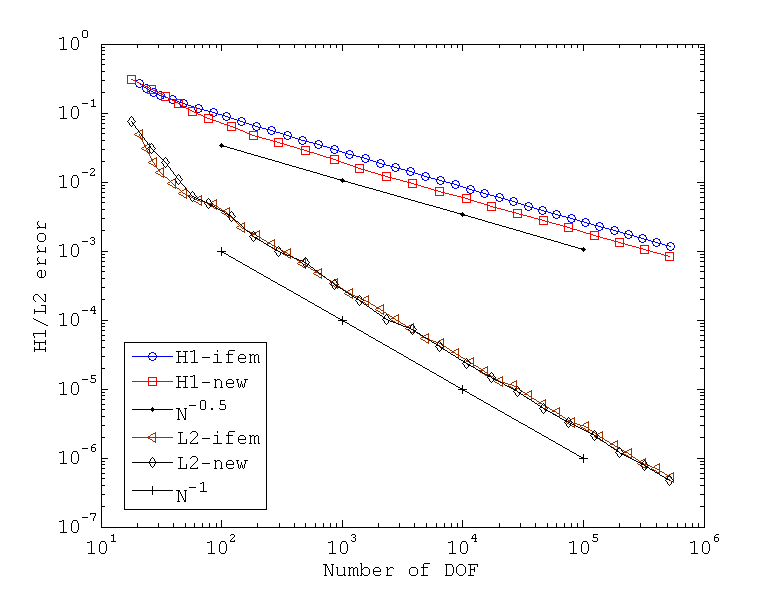}%
  \end{subfigure}
 \begin{subfigure}{0.49\textwidth}
  \includegraphics[keepaspectratio,width=8cm]{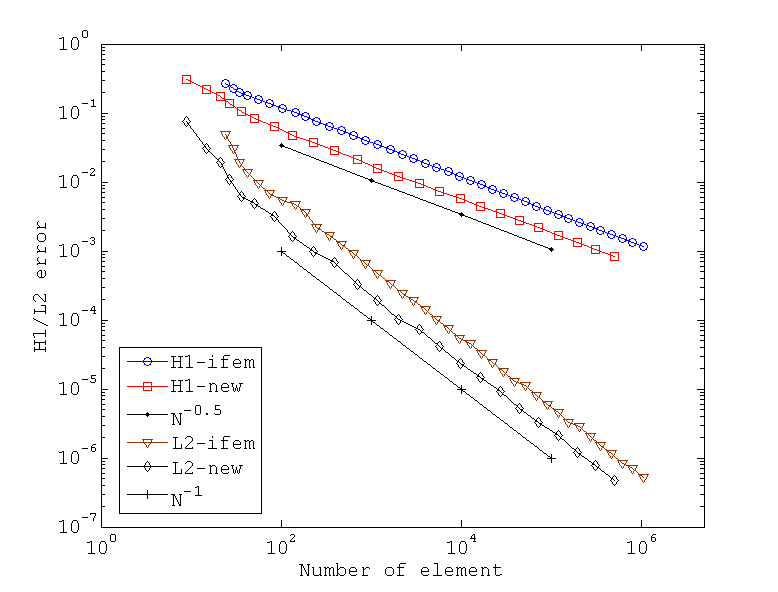}%
 \end{subfigure}
 \vskip -0.4cm
 \caption{Convergence rate of h-refinement for the Poisson's equation}\label{aa1}
\end{figure}

\begin{figure}[!h]
  \centering
  \begin{subfigure}{0.49\textwidth}
  \includegraphics[keepaspectratio,width=8cm]{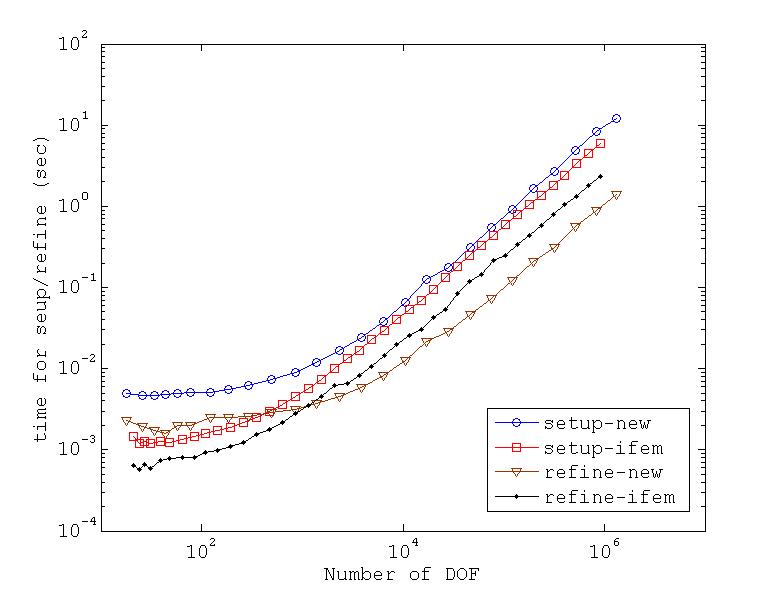}%
  \end{subfigure}
  \begin{subfigure}{0.49\textwidth}
  \includegraphics[keepaspectratio,width=8cm]{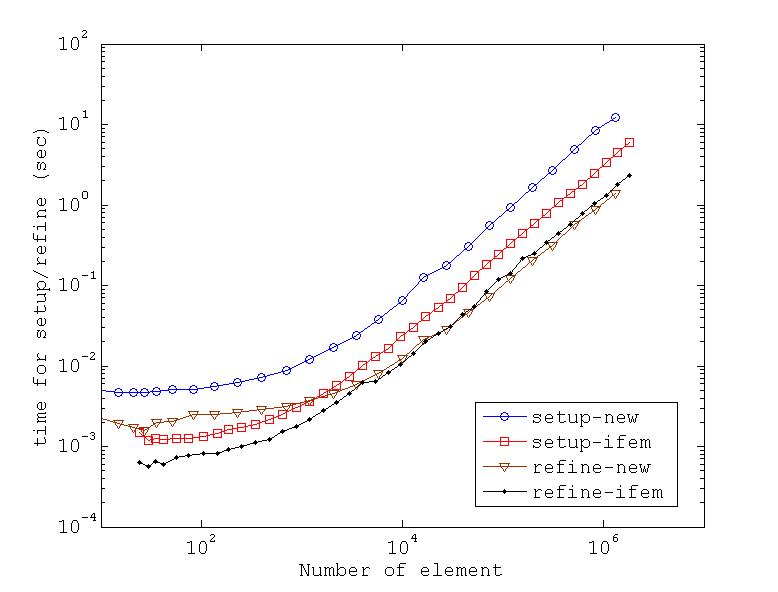}%
  \end{subfigure}
  \vskip -0.4cm
  \caption{Computation time (seconds) of h-refinement for the Poisson's equation}\label{aa4}
\end{figure}

\begin{table}[!h]
  \centering
  \begin{tabular}{|c|c|c|c|}
  \hline
   & Final mesh DOF & $\vert u-u_h\vert_1$ & Total time (sec)\\
   \hline
   iFEM & 697322 & 9.97$\times 10^{-4}$ & 79.0509\\
   \hline
   NEW & 517433 & 8.31$\times 10^{-4}$ & 35.2750\\
   \hline
  \end{tabular}
  \vskip -0.4cm
  \caption{Performance of h-refinement algorithms for the Poisson's equation}\label{vsvs2}
\end{table}

\subsection{Moving circle}\label{vs2}

This example is used to test the performance of refinement and coarsening.
What we want to do is to track the interface of $x^2 + y^2 = (0.5-t)^2, t\in[0,1]$.
One of the tracking state is given in Figure~\ref{108} and the performance of the new refinement and coarsening algorithms are given in Figure~\ref{bb2}.

\begin{figure}[!h]
  \begin{minipage}[c]{0.5\textwidth}
\centering
\includegraphics[keepaspectratio,width=15cm]{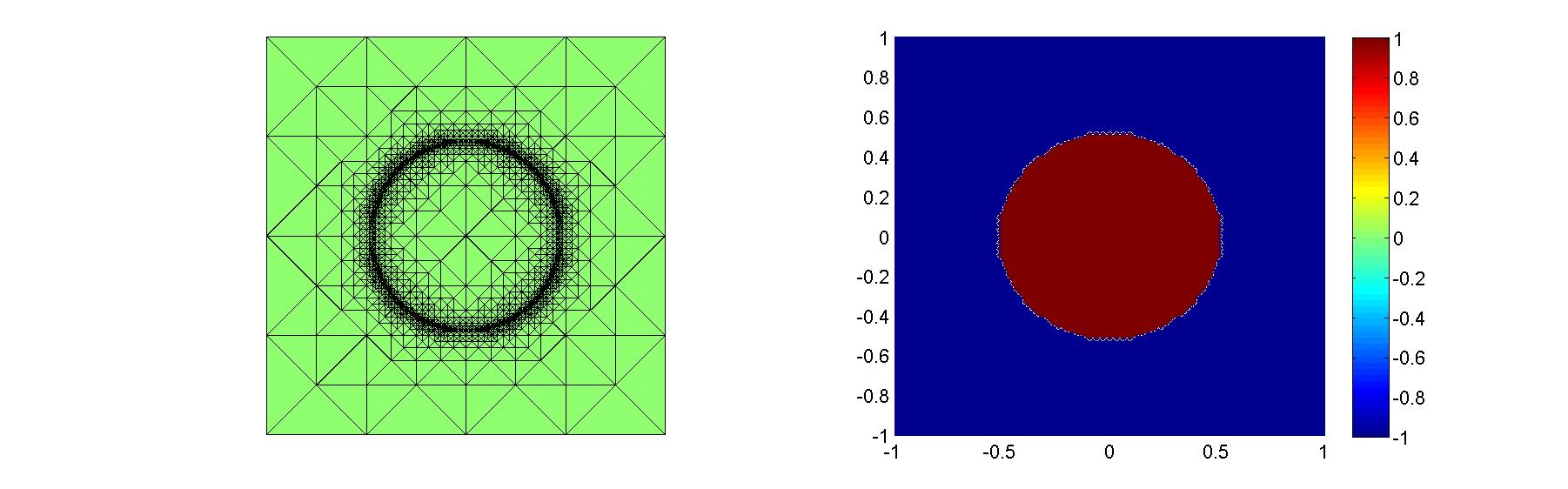}%
\end{minipage}%

\begin{minipage}[c]{0.5\textwidth}
\centering
\includegraphics[keepaspectratio,width=15cm]{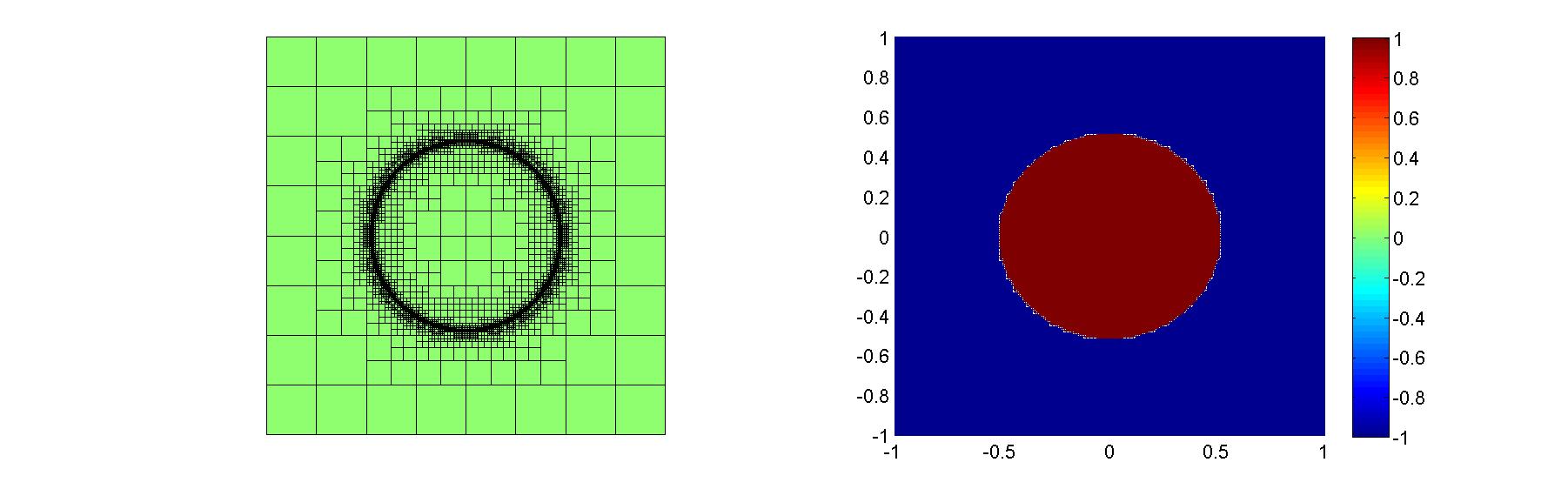}%
\end{minipage}
\vskip -0.4cm
\caption{Tracking state for moving circle, Left: mesh; Right: interface; Upper: triangular; Lower: quadrilateral}\label{108}
\end{figure}

\begin{figure}[!h]
  \centering
  \begin{subfigure}{0.49\textwidth}
  \includegraphics[keepaspectratio,width=8cm]{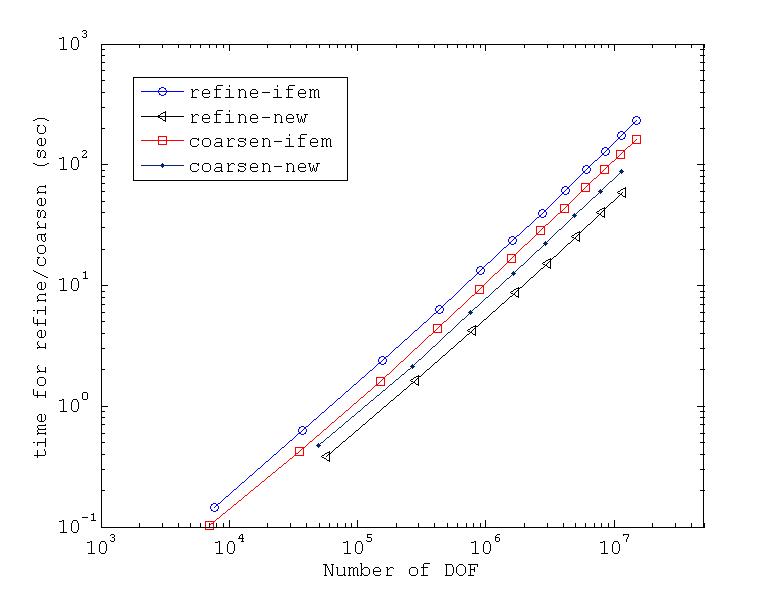}%
  \end{subfigure}
  \begin{subfigure}{0.49\textwidth}
  \includegraphics[keepaspectratio,width=8cm]{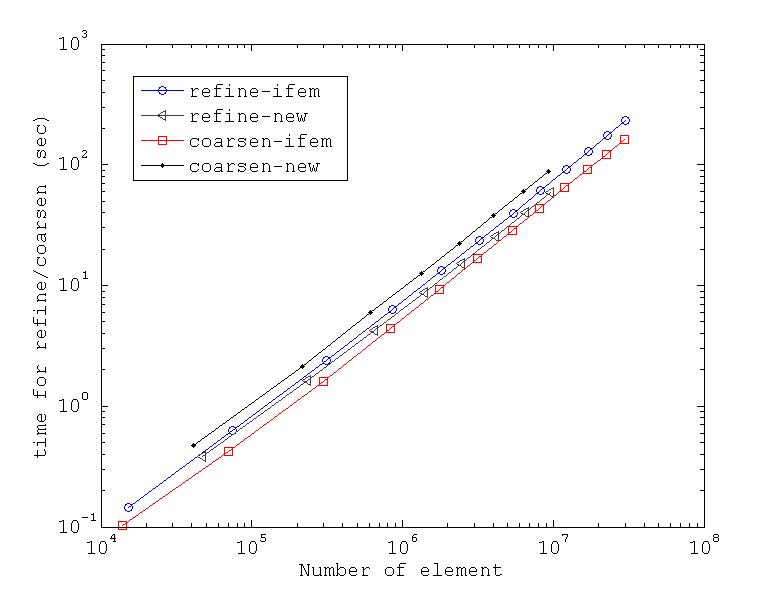}%
  \end{subfigure}
  \vskip -0.4cm
  \caption{Computation time (seconds) for the moving circle test}\label{bb2}
\end{figure}

\vskip5mm
The results of the above two examples in \S 5.1--5.2,  suggest that
\begin{itemize}
\item Convergence rate of the new adaptive algorithm is optimal (same as iFEM), while the errors by the new algorithm
are slightly smaller than the ones by iFEM; Furthermore, to reach same level of accuracy for solving the Poisson's equation, the adaptive quadrilateral meshes costs less computational time than the corresponding adaptive bisection meshes.
\item The new algorithms demonstrate experimentally linear computational complexity (as efficient as iFEM) in both refinement and coarsening.
\end{itemize}

\subsection{Poisson's ratio locking-free tests}\label{refine_test}
Two test problems are used to examine locking-free performance of the 5-node transition hybrid stress element.

The first one, a plane strain pure
bending cantilever beam (Figures~\ref{r}--\ref{ir}),   is a benchmark
test widely used in the literature, e.g.~\cite{pian1984rational,Pian-Wu,pian2000some, xie2004optimization,xie2008accurate,yu2011uniform,Zhou-Nie}.  The origin of
the coordinates $x, y$ is at the midpoint of the left end.
The body force $\mathbf{f}=(0,0)^T$, the surface traction $\mathbf{g}$ defined on $\Gamma_N=\{(x,y)\in[0,10]\times[-1,1]:x=10
~\text{or}~y=\pm1\}$ is given by $\mathbf{g}\mid_{x=10}=(-2Ey,0)^T,\mathbf{g}\mid_{y=\pm1}=(0,0)^T$, and
the exact solution is~\cite{yu2011uniform}
\begin{equation}\label{eqn:example1}
\mathbf{u}=\left(\begin{array}{c}-2(1-\nu^2)xy\\(1-\nu^2)x^2+\nu(1+\nu)(y^2-1)
\end{array}\right), \quad
\sigma=\left(\begin{array}{cc}-2Ey&0\\0&0
\end{array}\right).
\end{equation}

%
%
%
%
%
%
%
%
%
%

\begin{figure}[h]
\begin{center}
    \includegraphics[keepaspectratio,width=6cm]{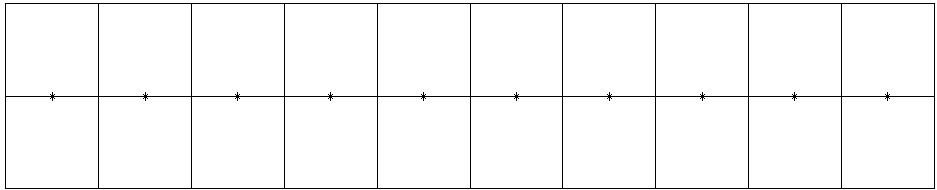}
    \includegraphics[keepaspectratio,width=6cm]{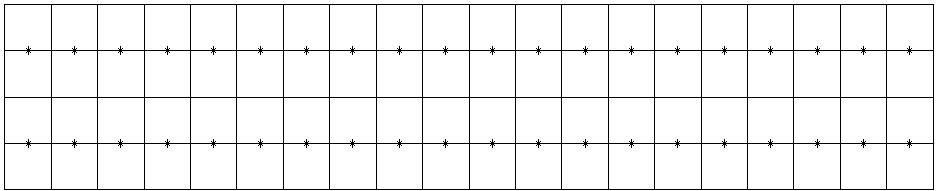}
\end{center}
\vskip -0.7cm
\caption{regular meshes} \label{r}
\end{figure}

\begin{figure}[h]
\begin{center}
    \includegraphics[keepaspectratio,width=6cm]{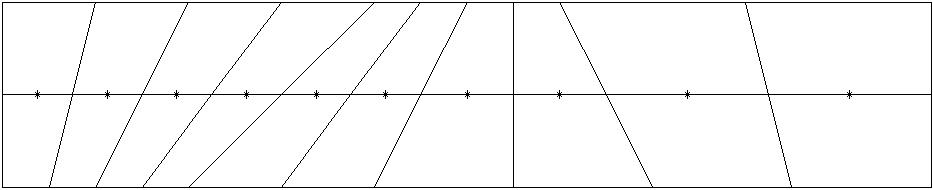}
    \includegraphics[keepaspectratio,width=6cm]{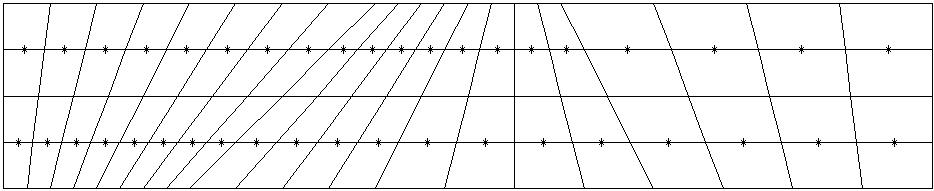}
\end{center}
\vskip -0.7cm
\caption{irregular meshes} \label{ir}
\end{figure}

The numerical results with E = 1500 and different values of
Poisson's ratio $\nu$ are listed in Tables \ref{hahatable1}--\ref{hahatable2}. The hybrid stress transition
element gives uniformly good results as Poisson's ratio $\nu\rightarrow 0.5$ or Lam\'e constant $\lambda\rightarrow \infty$, with first order accuracy for
the displacement approximation  and more than first order
accuracy for the stress approximation.
\begin{table}[!h]
\begin{center}
\begin{tabular}{*{10}{c}}
 \hline
&&regular &meshes&&&&irregular&meshes&\\
\cline{2-5}\cline{7-10}
$\nu$&$10\times2$&$20\times4$&$40\times8$&$80\times16$&&$10\times2$&$20\times4$&$40\times8$&$80\times16$\\\hline
0.49&0.0478&0.0239&0.0120&0.0060&&0.1033&0.0530&0.0268&0.0134\\
0.499&0.0496&0.0248&0.0124&0.0062&&0.1047&0.0537&0.0271&0.0136\\
0.4999&0.0497&0.0249&0.0124&0.0062&&0.1048&0.0538&0.0272&0.0136\\
0.49999&0.0497&0.0249&0.0124&0.0062&&0.1048&0.0538&0.0272&0.0136\\
0.49999999999&0.0498&0.0249&0.0124&0.0062&&0.1048&0.0538&0.0272&0.0136\\
\hline
\end{tabular}\\~\\~\\~\\
\end{center}
\vskip -50pt
\caption{$\frac{\|\mathbf{u}-\mathbf{u}_h\|_h}{|\mathbf{u}|_1}$ for {for  locking-free test 1}}\label{hahatable1}
\end{table}

\begin{table}
\begin{center}
\begin{tabular}{*{10}{c}}
\hline
&&regular&meshes&&&&irregular&meshes&\\
\cline{2-5}\cline{7-10}
$\nu$&$10\times2$&$20\times4$&$40\times8$&$80\times16$&&$10\times2$&$20\times4$&$40\times8$&$80\times16$\\\hline
0.49&1.5e-3 &7.1e-4 &3.3e-4 &9.4e-5 &&0.1018&0.0404&0.0149&0.0055\\
0.499&2.4e-4 &7.2e-5 &2.7e-5 &9.9e-6 &&0.1022&0.0419&0.0159&0.0060\\
0.4999&1.6e-5 &7.2e-6 &2.7e-6 &9.9e-7 &&0.1023&0.0421&0.0160&0.0060\\
0.49999&1.6e-6 &7.2e-7 &2.7e-7 &9.9e-8 &&0.1023&0.0421&0.0160&0.0060\\
0.49999999999&0 &0 &0 &0 &&0.1023&0.0421&0.0160&0.0060\\
\hline
\end{tabular}\\~\\~\\~\\
\end{center}
\vskip -50pt
\caption{$\frac{\|\sigma-\sigma_h\|_0}{\|\sigma\|_0}$ {for  locking-free test 1}}\label{hahatable2}
\end{table}

In the test example above,  the stress approximation is very accurate. This is partially owing to the fact that the analytical stress solution is a linear-polynomial tensor.
We now use a more difficult plane strain test with the same domain and initial meshes as in Figures~\ref{r}--\ref{ir}. In this test, $\Gamma_N=\{(x,y)\in[0,10]\times[-1,1]:x=10
~\text{or}~y=\pm1\}$,
 $\mathbf{f}=12(\frac{x^2(1-\nu)+y^2\nu}{1-\nu^2},0)^T$,
  $\mathbf{g}\mid_{x=10}=(\frac{-4000(1-\nu)-120y^2\nu}{1-\nu^2},0)^T,\mathbf{g}\mid_{y=\pm1}=(0,0)^T$.
The exact displacement and stress solutions   are known as
\begin{equation}\label{eqn:example2}
\mathbf{u}=\frac{1}{E}\left(\begin{array}{c}
-x^4(1-\nu)-6x^2y^2\nu-\frac{y^4\nu^2}{1-\nu}
\\
4x^3y\nu+\frac{4xy^3\nu^2}{1-\nu}
\end{array}\right), \quad
\sigma=\left(\begin{array}{cc}\frac{-4x^3(1-\nu)-12xy^2\nu}{1-\nu^2}&0\\0&0
\end{array}\right).
\end{equation}
Numerical results  in Tables \ref{hahahatable1}--\ref{hahahatable2} show that the hybrid stress transition
element gives uniformly  first order accuracy for
both the displacement and stress approximations as the Poisson's ratio $\nu\rightarrow 0.5$. This is exactly what we can expect from the theory.

\begin{table}[!h]
\begin{center}
\begin{tabular}{*{10}{c}}
 \hline
&&regular &meshes&&&&irregular&meshes&\\
\cline{2-5}\cline{7-10}
$\nu$&$10\times2$&$20\times4$&$40\times8$&$80\times16$&&$10\times2$&$20\times4$&$40\times8$&$80\times16$\\\hline
0.49   & 0.1443 & 0.0720 & 0.0360 &  0.0180 && 0.1277 & 0.0632 & 0.0315 & 0.0157\\
0.499  & 0.1433 & 0.0716 & 0.0357 &  0.0179 && 0.1268 & 0.0628 & 0.0313 & 0.0156\\
0.4999 & 0.1432 & 0.0715 & 0.0357 &  0.0179 && 0.1267 & 0.0628 & 0.0313 & 0.0156\\
0.49999& 0.1432 & 0.0715 & 0.0357 &  0.0179 && 0.1267 & 0.0628 & 0.0313 & 0.0156\\
0.49999999999& 0.1432 &  0.0715 & 0.0357 & 0.0179 && 0.1267 & 0.0628 & 0.0313 &0.0156\\
\hline
\end{tabular}\\~\\~\\~\\
\end{center}
\vskip -50pt
\caption{$\frac{\|\mathbf{u}-\mathbf{u}_h\|_h}{|\mathbf{u}|_1}$  {for  locking-free test 2}}\label{hahahatable1}
\end{table}

\begin{table}
\begin{center}
\begin{tabular}{*{10}{c}}
\hline
&&regular&meshes&&&&irregular&meshes&\\
\cline{2-5}\cline{7-10}
$\nu$&$10\times2$&$20\times4$&$40\times8$&$80\times16$&&$10\times2$&$20\times4$&$40\times8$&$80\times16$\\\hline
0.49   & 0.0518 & 0.0256 & 0.0127 & 0.0064 && 0.0527 & 0.0258 & 0.0128 & 0.0064\\
0.499  & 0.0518 & 0.0256 & 0.0127 & 0.0064 && 0.0528 & 0.0258 & 0.0128 & 0.0064\\
0.4999 & 0.0518 & 0.0256 & 0.0127 & 0.0064 && 0.0528 & 0.0258 & 0.0128 & 0.0064\\
0.49999& 0.0518 & 0.0256 & 0.0127 & 0.0064 && 0.0528 & 0.0258 & 0.0128 & 0.0064\\
0.49999999999& 0.0518 & 0.0256 & 0.0127 & 0.0064 && 0.0528 & 0.0258 & 0.0128 & 0.0064\\
\hline
\end{tabular}\\~\\~\\~\\
\end{center}
\vskip -50pt
\caption{$\frac{\Vert\sigma-\sigma_h\Vert_0}{\Vert\sigma\Vert_0}$  {for  locking-free test 2}}\label{hahahatable2}
\end{table}

\subsection{Adaptive algorithm  test with transition hybrid stress elements}\label{refine_test2}

We consider a square panel with edge length 2 and a one unit long edge crack~\cite{wu2009two}.
Owing to symmetry, only the upper half of the panel is analyzed; see Figure~\ref{crack_setup}.
Along the positive $x$-axis, the condition of symmetry is applied, and on other edges, traction boundary
conditions are prescribed according to the following mode I crack solution in polar coordinate~\cite{Atluri}:
$$
\sigma=
 \frac{1}{\sqrt{r}}\left\{\cos{\frac{\theta}{2}}\left(1-\sin\frac{\theta}{2}\sin\frac{3\theta}{2}\right),
  \cos{\frac{\theta}{2}}\left(1+\sin\frac{\theta}{2}\sin\frac{3\theta}{2}\right),
  \sin\frac{\theta}{2}\cos\frac{\theta}{2}\cos\frac{3\theta}{2}\right\}.
$$
The  $ \frac{1}{\sqrt{r}}$ stress singularity occurs at the crack tip.

\begin{figure}[!h]
  \centering
  \includegraphics[keepaspectratio,width=12cm]{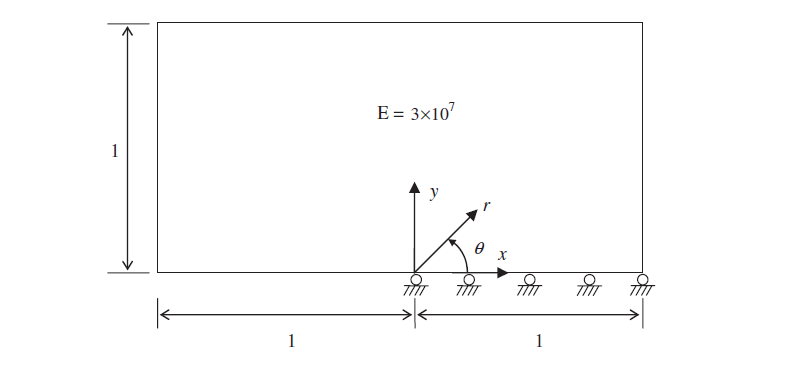}%
  \vskip -0.4cm
  \caption{Schematic diagram for half of a cracked plane strain panel. Along $x=\pm1$ and
$y=1$, exact tractions are prescribed.}\label{crack_setup}
\end{figure}
A $8\times4$ uniform mesh is taken as the initial mesh.
We show the relation between the number of DOF and the relative error $\frac{\Vert\sigma-\sigma_h\Vert}{\Vert\sigma\Vert}$ in
Figure~\ref{h_refine2}. We can see  that the stress error uniformly reduces with
a fixed factor on two successive meshes, and that the error on
the adaptively refined meshes decreases more rapidly than the one on the uniformly refined
meshes.
\begin{figure}[!h]
  \centering
  \begin{subfigure}{0.49\textwidth}
  \includegraphics[keepaspectratio,width=8cm]{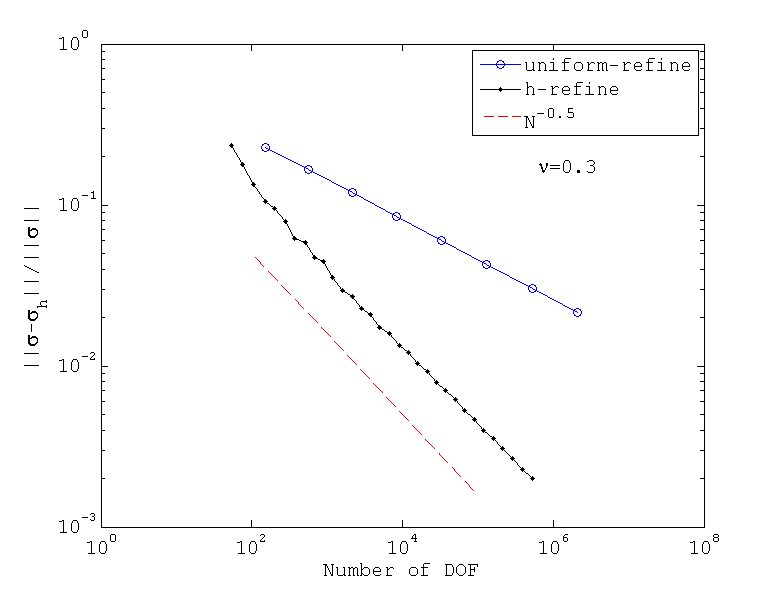}%
\end{subfigure}
\begin{subfigure}{0.49\textwidth}
  \includegraphics[keepaspectratio,width=8cm]{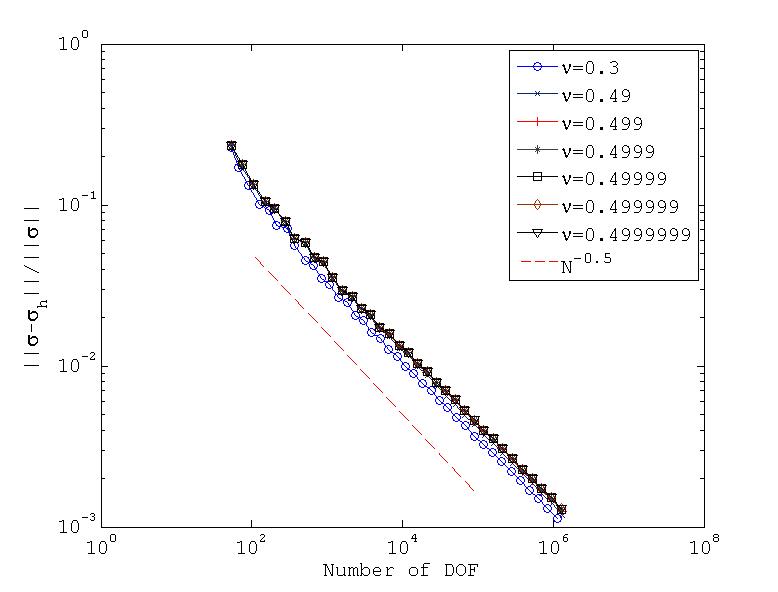}%
\end{subfigure}
\vskip -0.4cm
\caption{Convergence rates of adaptive hybrid stress transition elements  for crack problem}\label{h_refine2}
\end{figure}


\noindent \textbf{Acknowledgments.}
We would like to thank Prof. Pengtao Sun for his help on suggesting the error indicator used in the paper.
Huang is supported by the China Scholarship Council.  Xie is partially supported   by
the National Natural Science Foundation of China (11171239)  and
the Open Fund of  Key Laboratory of Mountain Hazards and Earth Surface Processes, CAS.
Zhang is partially supported by the NSFC Grant 91130011
and  the National High Technology Research and Development Program of China Grant 2012AA01A309.

\bibliographystyle{plain}
{\footnotesize
\bibliography{paper2do}
}
\end{document}